   \newcommand{\sym}{\mathfrak{S}}
   \newcommand{\N}{\mathbb{N}}
   \newcommand{\la}{\lambda}
  \newcommand{\ba}{\beta}
  \newcommand{\sa}{\sigma}
\newcommand{\cal}[1]{\mathcal{#1}}
  \newcommand{\modd}{ \; (\mbox{mod} \; d)}
  \newcommand{\bd}{\bar{d}}
  \newcommand{\lad}{\bar{c}_d(\la)}
\newcommand{\ladq}{\la^{(\bar{d})}}
\newcommand{\bdqla}{\bar{q}_d(\la)}
 \newcommand{\Bla}{ {\cal B}(\la)}
 \newcommand{\Blad}{ {\cal B}(\bar{c}_d(\la))}
 \newcommand{\Bladq}{ {\cal B}(\bdqla)}
 \newcommand{\Dla}{D(\la)}
 \newcommand{\Dlad}{D(\lad)}
 \newcommand{\Dbdqla}{D(\bdqla)}
 \newcommand{\Dmu}{D(\mu)} 
  \newcommand{\floor}[1]{\lfloor #1 \rfloor}
  \newcommand{\cH}{ {\cal H}}   
 \newcommand{\cB}{ {\cal B}}
 \newcommand{\cP}{ {\cal P}}
 \newcommand{\cNB}{ {\cal{NB}}}
 \newcommand{\cDP}{ {\cal{DP}}}
 \newcommand{\cHij}{ {\cal H}_{i \rightarrow j}}
 \newcommand{\cBij}{ {\cal B}_{i \rightarrow j}}
 \newcommand{\cNBij}{ {\cal{NB}}_{i \rightarrow j}}
  \newcommand{\cHji}{ {\cal H}_{j \rightarrow i}} 
  \newcommand{\cBji}{ {\cal B}_{j \rightarrow i}}
 \newcommand{\cNBji}{ {\cal{NB}}_{j \rightarrow i}}
  \newcommand{\cBijstar}{ {\cal B}_{i^* \rightarrow j^*}}
 \newcommand{\cBjistar}{ {\cal B}_{j^* \rightarrow i^*}}
 \newcommand{\ov}{ \overline}   
 \newcommand{\sij}{ \{ij\}}   
  \newcommand{\sijs}{ \{i^*j^*\}}
\long\def\symbolfootnote[#1]#2{\begingroup\def\thefootnote{\fnsymbol{footnote}}
\footnote[#1]{#2}\endgroup}
\newtheorem{theorem}{Theorem}[section] 
\newtheorem{lemma}[theorem]{Lemma}     
\newtheorem{corollary}[theorem]{Corollary}
\title[On bar lengths in partitions]{On bar lengths in partitions}
\author{Jean-Baptiste Gramain}
\address{Institut de Mathématiques de Jussieu, 
Université Denis Diderot, Paris VII \\
UFR De Mathématiques, 2 Place Jussieu, F-75251 Paris Cedex 05, France
}
\email{gramain@math.jussieu.fr}
\author{J\o rn B. Olsson}
\address{
Department of Mathematical Sciences, University of Copenhagen\\
Universitetsparken 5,DK-2100 Copenhagen \O, Denmark
}
\email{olsson@math.ku.dk}
\begin{document}
\maketitle

\begin{abstract}
In this paper, we present, given a odd integer $d$, a decomposition of the multiset of bar lengths 
of a bar partition $\la$ as the union of two multisets, one consisting
of the bar lengths in its $\bar{d}$-core partition $\lad$ 
and the other consisting of modified bar lengths in its $\bar{d}$-quotient partition. 
In particular, we obtain that the multiset of bar lengths in
$\lad$ is a sub-multiset of the multiset of bar lengths in
$\la$. Also, we obtain a relative bar formula for the degrees of spin
characters of the Schur extensions of $\sym_n$. 
The proof involves a recent similar result for partitions, 
proved in \cite{BeGrOl}. 

\end{abstract}

\symbolfootnote[0]{2000 Mathematics Subject Classification 20C30 (primary), 20C15, 20C20 (secondary)}

\symbolfootnote[0]{Keywords: Partitions, Bar Partitions, Bar Lengths, Symmetric Group, Covering Groups}

\section{Introduction}\label{intro}
For any positive integer $n$, we call any partition $\la$ of $n$ into
distinct parts a {\emph{bar partition}} of $n.$  
It was proved by I. Schur (in \cite{Schur}) that the bar partitions of
$n$ canonically label the associate classes of irreducible projective 
representations of the symmetric group $\sym_n$, or the associate
classes of faithful irreducible characters 
({\emph{spin characters}}) of a 2-fold covering group $\widehat{\sym_n}$ of $\sym_n$.

In \cite[Theorem 1]{Morris}, A.O. Morris proved a formula 
(the {\it bar formula}) for the degrees of the spin characters 
analogous to the celebrated hook formula (\cite[Theorem 2.3.21]{James-Kerber}) 
for the irreducible characters of $\sym_n.$ The bar
formula is a reformulation of the original degree formula proved by
Schur in \cite[IX, p.235]{Schur}. We state the bar formula below. 
In the bar formula the role played by hooks and hook lengths
of partitions is replaced by {\emph{bars}} and {\emph{bar lengths}} of bar partitions. 

If $\la=(a_1 > \cdots > a_m >0)$ is a bar partition of $n$, then the multiset of bar lengths in 
$\la$ is
$${\cal B}(\la)=\displaystyle \bigcup_{1 \leq i \leq m} 
\{ 1, \, \ldots , \, a_i \} \cup \{ a_i + a_j \, | \, j>i\} \setminus \{ a_i - a_j \, | \, j >i \} .$$
Writing $\pi {\cal B}(\la)$ for the product of all the bar lengths in
$\la$, we then have the bar formula 
for the degree of  a spin character $\rho_{\la}$ of $\widehat{\sym_n}$ labelled by $\la:$  
$$\rho_{\la}(1)= 2^{\floor{(n-m)/2}} \frac{n!}{\pi {\cal B}(\la)},$$
where, for any rational number $x$, $\floor{x}$ denotes the integral part of $x$.

\smallskip
For any odd integer $d\geq 3$, it is well-known that the bar partition
$\la$ is uniquely determined by its {\emph{$\bd$-core}} $\lad$ 
and its {\emph{$\bd$-quotient}} $\ladq$ (see e.g. \cite[Proposition
4.2]{Olsson-Combinatorics}). 
The $\bd$-core partition $\lad$ of $\la$ is 
obtained by removing from $\la$ all the bars of length divisible by
$d$, while $\ladq$ encodes the information about these bars. 

\smallskip
For any bar partition $\la$ of $n$ and odd integer $d \geq 3$, we
denote by $\bdqla$ the (unique) bar partition which has an {\it empty}
$\bd$-core and the same $\bd$-quotient as $\la$. We refer to $\bdqla$
as the $\bd$-{\it quotient partition} of $\la$ and have that 
$|\la|=|\lad|+|\bdqla|$ 
(see \cite[Corollary 4.4]{Olsson-Combinatorics}). This identity is
reflected in our main result on the decomposition of the multiset of
bar lengths  
(Theorem \ref{main}). It states that the multiset $\Bla$ of bar lengths 
in $\la$ is the union of $\Blad$ and  $\widetilde{\cal B}(\bdqla)$  
where the multiset $ \widetilde{\cal B}(\bdqla)$ is obtained from
$\Bladq$ by modifying its elements in an explicitly controlled way, 
depending on the $\bd$-core of $\la.$
As an immediate corollary we obtain that $\Blad$ is contained in $\Bla$.

\smallskip
In Section 2, we describe the {\emph{doubling}} of bar partitions; 
this construction was first suggested by I. G. Macdonald in
\cite{Macdonald}, and then studied by A. O. Morris and A. K. Yaseen in
\cite{Morris-Yaseen}. It allows us to see all the bar lengths in a bar
partition as hook lengths in a larger partition. We present the
construction, as well as interpretations of the bar core and bar
quotient in this setting. 
In Section 3, we introduce a number of subsets of the set of hooks in
the doubled partition, 
and derive from Macdonald's construction a number of properties of
hook lengths and bar lengths. In Section 4, we then apply the results
of \cite{BeGrOl} to deduce our main result, Theorem \ref{main}. We
then finally apply the theorem to give a $d$-version of the bar 
formula (a relative bar formula) .

\section{The Macdonald construction}\label{doubling}
Let $n \geq 1$ be any integer, and $\la=(a_1 > \cdots > a_m >0)$ be a 
bar partition of $n$. I. G. Macdonald presented in 
\cite[Chapter III, p. 135]{Macdonald} a construction for the 
{\emph{doubling}} of $\la$, which we present here using the example 
given by the bar partition $\la=(7, \, 5, \, 3, \, 2)$ of $n=17$.

The {\emph{shifted Young diagram}} $S(\la)$ of $\la$ is obtained from 
the usual Young diagram of $\la$ by moving, for each $i \geq 1$, the 
$i$th row $(i-1)$ squares to the right. In our example, this gives

\setlength{\unitlength}{1.2mm}

\linethickness{0.2mm}

\begin{center}

\begin{picture}(30,16)

\put(16,0){\line(1,0){8}}
\put(12,4){\line(1,0){12}}
\put(8,8){\line(1,0){20}}
\put(4,12){\line(1,0){28}}
\put(4,16){\line(1,0){28}}
\put(4,16){\line(0,-1){4}}
\put(8,16){\line(0,-1){8}}
\put(12,16){\line(0,-1){12}}
\put(16,16){\line(0,-1){16}}
\put(20,16){\line(0,-1){16}}
\put(24,16){\line(0,-1){16}}
\put(28,16){\line(0,-1){8}}
\put(32,16){\line(0,-1){4}}

\end{picture}
\end{center}
Equivalently, $S(\la)$ can be seen as the part above the diagonal in the Young diagram 
of the {\emph{doubled partition}} 
$D(\la)=(a_1, \, \ldots a_m \, | \, a_1-1, \, \ldots , \, a_m-1)$ of $2n$ 
(given in the Frobenius' notation, see e.g. \cite[Chapter
I]{Macdonald}). 
In our example, we obtain the partition 
$D(\la)=(8, \, 7, \, 6, \, 6, \, 4, \, 2, \, 1)$ of $2n=34$, which has Young diagram
\setlength{\unitlength}{1.2mm}

\linethickness{0.2mm}

\begin{center}

\begin{picture}(30,29)

\put(16,12){\line(1,0){8}}
\put(12,16){\line(1,0){12}}
\put(8,20){\line(1,0){20}}
\put(4,24){\line(1,0){28}}
\put(4,28){\line(1,0){28}}
\put(4,28){\line(0,-1){4}}
\put(8,28){\line(0,-1){8}}
\put(12,28){\line(0,-1){12}}
\put(16,28){\line(0,-1){16}}
\put(20,28){\line(0,-1){16}}
\put(24,28){\line(0,-1){16}}
\put(28,28){\line(0,-1){8}}
\put(32,28){\line(0,-1){4}}

\dashline[+60]{1}(0,0)(4,0)
\dashline[+60]{1}(0,4)(8,4)
\dashline[+60]{1}(0,8)(16,8)
\dashline[+60]{1}(0,12)(16,12)
\dashline[+60]{1}(0,16)(12,16)
\dashline[+60]{1}(0,20)(8,20)
\dashline[+60]{1}(0,24)(4,24)
\dashline[+60]{1}(0,28)(4,28)
\dashline[+60]{1}(0,28)(0,0)
\dashline[+60]{1}(4,24)(4,0)
\dashline[+60]{1}(8,20)(8,4)
\dashline[+60]{1}(12,16)(12,8)
\dashline[+60]{1}(16,12)(16,8)

\end{picture}

\end{center}
Filling the boxes of the Young diagram of $D(\la)$ with the 
corresponding hook lengths, we obtain that the bar lengths 
in $\la$ are those hook lengths that appear in the subdiagram $S(\la)$. 
In our example, we get 
$$\Bla=\{ 12, \, 10, \, 9, \, 8, \, 7, \, 7, \, 6, \, 5, \, 5, \, 4, \, 3, \, 3, \, 2, \, 2, \, 1, \, 1, \, 1\}:$$
\setlength{\unitlength}{1.2mm}

\linethickness{0.2mm}

\begin{center}

\begin{picture}(30,27)

\put(16,12){\line(1,0){8}}
\put(12,16){\line(1,0){12}}
\put(8,20){\line(1,0){20}}
\put(4,24){\line(1,0){28}}
\put(4,28){\line(1,0){28}}
\put(4,28){\line(0,-1){4}}
\put(8,28){\line(0,-1){8}}
\put(12,28){\line(0,-1){12}}
\put(16,28){\line(0,-1){16}}
\put(20,28){\line(0,-1){16}}
\put(24,28){\line(0,-1){16}}
\put(28,28){\line(0,-1){8}}
\put(32,28){\line(0,-1){4}}

\dashline[+60]{1}(0,0)(4,0)
\dashline[+60]{1}(0,4)(8,4)
\dashline[+60]{1}(0,8)(16,8)
\dashline[+60]{1}(0,12)(16,12)
\dashline[+60]{1}(0,16)(12,16)
\dashline[+60]{1}(0,20)(8,20)
\dashline[+60]{1}(0,24)(4,24)
\dashline[+60]{1}(0,28)(4,28)
\dashline[+60]{1}(0,28)(0,0)
\dashline[+60]{1}(4,24)(4,0)
\dashline[+60]{1}(8,20)(8,4)
\dashline[+60]{1}(12,16)(12,8)
\dashline[+60]{1}(16,12)(16,8)

\put(1,1){\bf{$1$}}
\put(1,5){\bf{$3$}}
\put(1,9){\bf{$6$}}
\put(1,13){\bf{$9$}}
\put(0.5,17){\bf{$10$}}
\put(0.5,21){\bf{$12$}}
\put(0.5,25){\bf{$14$}}
\put(5,5){\bf{$1$}}
\put(5,9){\bf{$4$}}
\put(5,13){\bf{$7$}}
\put(5,17){\bf{$8$}}
\put(4.5,21){\bf{$10$}}
\put(4.5,25){\bf{12}}
\put(9,9){\bf{$2$}}
\put(9,13){\bf{$5$}}
\put(9,17){\bf{$6$}}
\put(9,21){\bf{8}}
\put(8.5,25){\bf{10}}
\put(13,9){\bf{$1$}}
\put(13,13){\bf{$4$}}
\put(13,17){\bf{5}}
\put(13,21){\bf{7}}
\put(13,25){\bf{9}}
\put(17,13){\bf{2}}
\put(17,17){\bf{3}}
\put(17,21){\bf{5}}
\put(17,25){\bf{7}}
\put(21,13){\bf{1}}
\put(21,17){\bf{2}}
\put(21,21){\bf{4}}
\put(21,25){\bf{6}}
\put(25,21){\bf{1}}
\put(25,25){\bf{3}}
\put(29,25){\bf{1}}

\end{picture}
\end{center}

We refer to \cite[Section 2.7]{James-Kerber} or \cite[Section 
1]{Olsson-Combinatorics} for the basic facts about $\beta$-sets for
partitions and their relation to hooks. In particular, if $X$ is a
$\beta$-set for the partition $\la,$ then there is a canonical
correspondence between the hooks $z$ in $\la$ and pairs $(a,b)$ of
non-negative integers, where $a \in X, ~b \notin X$ and $a>b.$ The
length $h(z)$ of the hook $z$ is then $a-b.$
   
Now take any odd integer $d \geq 3$. We represent a
{\emph{$d$-normalized}} $\beta$-set $X$ 
for $\Dla$ (i.e. $|X|$ is a multiple of $d$) 
by placing beads on an abacus with $d$ runners. If $d=3$,
then in our example we can take 
$X=\{0, \,  1,  \, 3, \, 5,  \, 8, \, 11, \, 12, \, 14, \, 16 \}$, and we obtain

\setlength{\unitlength}{1mm}
\linethickness{0.2mm}
\begin{center}
\begin{picture}(30,37)

\put(0.5,1){$15$}
\put(0.5,7){$12$}
\put(1,13){$9$}
\put(1,19){$6$}
\put(1,25){$3$}
\put(1,31){$0$}
\put(6.5,1){$16$}
\put(6.5,7){$13$}
\put(6.5,13){$10$}
\put(7,19){$7$}
\put(7,25){$4$}
\put(7,31){$1$}
\put(12.5,1){$17$}
\put(12.5,7){$14$}
\put(12.5,13){$11$}
\put(13,19){$8$}
\put(13,25){$5$}
\put(13,31){$2$}

\put(2.2,8){\circle{5}}
\put(2,26.2){\circle{5}}
\put(2,32.2){\circle{5}}
\put(8.2,2){\circle{5}}
\put(8,32.2){\circle{5}}
\put(14.2,8){\circle{5}}
\put(14.2,14){\circle{5}}
\put(14,20){\circle{5}}
\put(14,26){\circle{5}}

\end{picture}
\end{center}

For any integer $\ell$, we denote by $[\ell]_d$ the
{\emph{$d$-residue}} of $\ell$, i.e. the least non-negative integer
congruent to $\ell \modd$. We label each node in the Young diagram 
of $\Dla$ by a $d$-residue as follows: the $d$-residue labelling the 
$(i, \, j)$-node is $[j-i]_d$. In particular, note that the
{\emph{diagonal nodes}} (which correspond to hooks whose lengths are
twice the size of the parts of $\la$) all have residue 0. 
Writing $H(\Dla)$ for the set of hooks in $\Dla$, we define, 
for each $0 \leq i, \, j \leq d-1$, the subset $H_{i \rightarrow
  j}(\Dla)$ of hooks of $\Dla$ whose hand node and foot node have 
$d$-residues $i$ and $[j+1]_d$ respectively. For any hook $z \in
H(\Dla)$, we have that $z \in H_{i \rightarrow j}(\Dla)$ if and only
if, in the abacus, $z$ corresponds to a bead $a$ on the $i$th runner 
and an empty spot $b$ on the $j$-th runner.  In this case, the length
$h(z)$ of $z$ satisfies $h(z) \equiv j-i \modd$ (see \cite[Section 2.7]{James-Kerber}
for details). For each $0 \leq i \neq j \leq d-1$, 
we write $H_{\{i\}}(\Dla)$ for $H_{i \rightarrow i}(\Dla)$  
and $H_{\{ij\}}(\Dla)$ for 
$H_{i \rightarrow j}(\Dla) \cup H_{j \rightarrow i}(\Dla)$. 
In particular, $\bigcup_{0 \leq i \leq d-1} H_{\{i\}}(\Dla)$ is 
the set of hooks of length divisible by $d$ in $\Dla$.

\smallskip

For a given runner in the abacus for $\Dla$ we may regard the positions
of the elements of $X$ as beads on the runner as a $\beta$-set. We obtain the
$d$-quotient $\Dla^{(d)}$ of $\Dla$ as the $d$-tuple of these
$\beta$-sets. The fact that we took a normalized $\beta$-set ensures
that the $d$-quotient we obtain is the same as the one we would obtain
by considering the {\emph{$d$ star diagram}} of $\Dla$ (\cite[Theorem
2.7.37]{James-Kerber}).
We may then reformulate \cite[Theorem 4]{Morris-Yaseen} as

\begin{theorem}
With the above notation, the $d$-quotient $\Dla^{(d)}$ of $\Dla$ has the form
$$\Dla^{(d)}= ( D(\mu_0), \, \mu_1, \, \ldots , \,
\mu_{\frac{d-1}{2}}, \, \mu_{\frac{d-1}{2}}^*, \, \ldots , \, \mu_1^*
),$$where $\mu_0$ is a bar partition, 
$\mu_1, \, \ldots , \, \mu_{\frac{d-1}{2}}$ 
are partitions, and $^*$ denotes conjugation of partitions. 

Furthermore, the $\bd$-quotient of $\la$ is 
$\ladq=(\mu_0, \, \mu_1, \, \ldots , \, \mu_{\frac{d-1}{2}})$.
\end{theorem}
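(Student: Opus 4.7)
My plan is to prove the theorem by combining the explicit Frobenius description of $\Dla$ with a symmetry analysis of the $d$-abacus. The strategy starts by choosing a $d$-normalized $\beta$-set $X$ for $\Dla$ adapted to the Frobenius coordinates $(a_1, \ldots, a_m \mid a_1 - 1, \ldots, a_m - 1)$. Because the arm lengths exceed the leg lengths by exactly one at each diagonal box, one can arrange for $X$ to satisfy a near self-conjugate symmetry of the form $k \in X \iff L - k \notin X$, with both $|X|$ and $L$ divisible by $d$. The odd parity of $d$ is what makes this simultaneous normalization possible: the involution $k \mapsto L - k$ then permutes the $d$-residues by $i \mapsto d - i$, fixing only residue $0$.

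Next, I would decompose $X$ into the $\beta$-sets $X_0, X_1, \ldots, X_{d-1}$ on the $d$ runners, giving partitions $\nu_0, \nu_1, \ldots, \nu_{d-1}$. The involution above pairs runner $i$ with runner $d-i$ for $1 \leq i \leq (d-1)/2$, and the induced operation on a single runner is precisely the complement--reflection on $\beta$-sets that corresponds to conjugation of partitions; hence $\nu_{d-i} = \nu_i^*$. For runner $0$ the involution restricts to a self-symmetry of $X_0$ of the same arm-is-leg-plus-one type, forcing $\nu_0$ to itself be a doubled bar partition: $\nu_0 = D(\mu_0)$ for a uniquely determined bar partition $\mu_0$. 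This already produces the symmetric shape asserted in the theorem.

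To identify the tuple $(\mu_0, \mu_1, \ldots, \mu_{(d-1)/2})$ with the $\bd$-quotient $\ladq$, I would invoke the relationship between the $d$-abacus of $\Dla$ and the $d$-bar abacus of $\la$ which is implicit in the Macdonald construction recalled above: by \cite[Proposition 4.2]{Olsson-Combinatorics} these two abaci carry the same data on their first $(d+1)/2$ runners, and this data is by definition $\ladq$. The main obstacle is the first step: choosing $X$ so that $|X|$, $L$, and the fit of $X$ inside $\{0, 1, \ldots, L\}$ are all simultaneously compatible with the mod $d$ structure. The oddness of $d$ is essential precisely because it guarantees that residue $0$ is the unique fixed point of $i \mapsto d - i$, which is what singles out runner $0$ as the runner carrying the doubled-bar-partition structure.
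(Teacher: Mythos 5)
A preliminary remark: the paper does not prove this statement at all --- it is quoted verbatim as a reformulation of \cite[Theorem 4]{Morris-Yaseen} --- so your outline is being measured against the argument in that reference, which is indeed an abacus symmetry argument of the general kind you describe. Your skeleton for the first half of the theorem (a reflection symmetry of a $\beta$-set for $\Dla$, inducing conjugation between runners $i$ and $d-i$ and a doubling structure on runner $0$) is the right idea. But the symmetry you posit, $k \in X \iff L-k \notin X$ for all $k$, cannot hold as stated: if it held everywhere it would exhibit $X$ as a $\beta$-set for both $\Dla$ and its conjugate, forcing $\Dla$ to be self-conjugate, which it never is (its Frobenius leg lengths are its arm lengths minus one); moreover in your normalization $L=2|X|$ is even, so at $k=L/2$ the condition reads $L/2 \in X \iff L/2 \notin X$. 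The correct statement is that $k\mapsto L-k$ maps $X$ bijectively onto the complement of $X$ in $\{0,\dots,L\}$ \emph{minus the single fixed gap} $L/2$. That exceptional gap lies on runner $0$ (since $L/2\equiv 0 \modd$) and is precisely what makes the runner-$0$ partition a doubled partition $D(\mu_0)$ rather than a self-conjugate one; you sense this locally on runner $0$, but the global symmetry must be stated with the exception or the whole argument collapses.

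The more serious gap is in the second half. The identification of $(\mu_0,\mu_1,\dots,\mu_{(d-1)/2})$ with $\ladq$ is asserted, not proved, and the citation is wrong: \cite[Proposition 4.2]{Olsson-Combinatorics} says that $\la$ is determined by its $\bd$-core and $\bd$-quotient; it says nothing about the $d$-abacus of $\Dla$ and the $\bd$-abacus of $\la$ ``carrying the same data on their first $(d+1)/2$ runners.'' That comparison of the two bead configurations --- relating a $\beta$-set of $\Dla$, e.g. the first-column hook lengths $\{2a_1\}\cup\{a_1+a_j\}_{j>1}\cup\bigl(\{1,\dots,a_1-1\}\setminus\{a_1-a_j\}_{j>1}\bigr)$, to the set $\{a_1,\dots,a_m\}$ placed on the $\bd$-abacus --- is exactly the combinatorial content of Morris--Yaseen's theorem, and it is the step your proposal omits. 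As written you would obtain only the symmetric shape of $\Dla^{(d)}$, not the identification with the $\bd$-quotient of $\la$.
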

In our example, we find $\Dla^{(3)}=((2), \, (4), \,
(1,1,1,1))=(D((1)), \, (4), \, (4)^*)$, 
and $\ladq=\la^{(\bar{3})}=((1), \, (4))$.

\smallskip
Removing all the hooks of length divisible by $d$ in $\Dla$ (or,
equivalently, moving all the beads in the abacus of $\Dla$ as far up 
as possible on their respective runners), we obtain the $d$-core 
$\Dla_{(d)}$ of $\Dla$. Then we see (cf \cite[p. 26]{Morris-Yaseen}) 
that $\Dla_{(d)}=D(\lad)$, where $\lad$ is the $\bd$-core of $\la$ 
(which may also be obtained from $\la$ by removing all the bars of 
length divisible by $d$; the removal of such a bar corresponds to 
removing a pair of $d$-hooks from $\Dla$, one whose node is in
$S(\la)$, and its counterpart in the lower half of the diagram). 
In our example, we find $\Dla_{(3)}=(3, \, 1)=D((2))=D(\lad)$.

\smallskip
We define the {\emph{$\bar{d}$-quotient partition}} of $\la$ to be the
(uniquely defined) bar partition $\bdqla$ which has empty 
$\bd$-core, and $\bd$-quotient $\bdqla^{(\bd)}=\la^{(\bd)}$. 
The doubled partition $D(\bdqla)$ therefore has empty $d$-core, 
and $d$-quotient $D(\bdqla)^{(d)}=\Dla^{(d)}$. This proves that 
$D(\bdqla)$ is the {\emph{$d$-quotient partition}} of $\Dla$, 
which we write as $D(\bdqla)=q_d(\Dla).$ 

In the $d$-abacus of $D(\bdqla)$, the partition associated to each
runner is the same as for $\Dla$, but the corresponding $\beta$-sets
all have the same number of elements (which is the number of beads on the runners, and this is the same for each runner since $D(\bdqla)$ has empty $d$-core). In our example, we can take this
number to be 4, and we obtain 

\setlength{\unitlength}{1mm}
\linethickness{0.2mm}
\begin{center}
\begin{picture}(30,48)

\put(0.5,1){$18$}
\put(0.5,7){$21$}
\put(0.5,13){$15$}
\put(0.5,19){$12$}
\put(1,25){$9$}
\put(1,31){$6$}
\put(1,37){$3$}
\put(1,43){$0$}
\put(6.5,1){$22$}
\put(6.5,7){$19$}
\put(6.5,13){$16$}
\put(6.5,19){$13$}
\put(6.5,25){$10$}
\put(7,31){$7$}
\put(7,37){$4$}
\put(7,43){$1$}
\put(12.5,1){$23$}
\put(12.5,7){$20$}
\put(12.5,13){$17$}
\put(12.5,19){$14$}
\put(12.5,25){$11$}
\put(13,31){$8$}
\put(13,37){$5$}
\put(13,43){$2$}

\put(2.4,14){\circle{5}}
\put(2,32.2){\circle{5}}
\put(2,38.2){\circle{5}}
\put(2,44.2){\circle{5}}
\put(8.2,2){\circle{5}}
\put(8,32.2){\circle{5}}
\put(8,38.2){\circle{5}}
\put(8,44.2){\circle{5}}
\put(14,32.2){\circle{5}}
\put(14,38.2){\circle{5}}
\put(14.4,20){\circle{5}}
\put(14.4,26){\circle{5}}

\end{picture}
\end{center}

\noindent
We therefore have $D(\bar{q}_3(\la))=q_3(\Dla)=(11, \, 5^2, \, 3, \, 1^6)$, 
and $\bar{q}_3(\la)=(10, \, 3, \, 2)$.

\smallskip
It is easy to see, using \cite[Theorem (4.3)]{Olsson-Combinatorics}, 
that since $\la$ and $\bdqla$ have the same $\bd$-quotient, 
there is a length-preserving bijection between the sets of bars of
length divisible by $d$ in $\la$ and $\bdqla$ respectively. 
In our example, both multisets of lengths are $\{ 12, \, 9, \, 6, \, 3, \, 3 \}$.

\smallskip





\section{Multisets of bar lengths}
We keep the notation as in Section \ref{doubling}, and we take any bar
partition $\mu$ (which we want to specialize to $\mu \in \{ \la, \,
\lad, \, \bdqla \}$). We define several subsets of the set 
$H(D(\mu))$ of hooks and the multiset ${\cal H}(\Dmu)$ 
of hook lengths in $\Dmu$.
This is illustrated in the following diagram:

\setlength{\unitlength}{1.5mm}

\linethickness{0.2mm}

\begin{center}

\begin{picture}(30,30)

\put(4,28){\line(1,0){28}}
\put(4,28){\line(0,-1){4}}

\put(32,28){\line(0,-1){4}}
\put(28,24){\line(0,-1){4}}
\put(24,20){\line(0,-1){8}}

\put(16,28){\line(0,-1){16}}
\put(20,28){\line(0,-1){16}}

\put(16,16){\line(1,0){4}}
\put(16,20){\line(1,0){4}}
\put(16,24){\line(1,0){4}}

\put(4,24){\line(1,0){4}}
\put(28,24){\line(1,0){4}}
\put(8,20){\line(1,0){4}}
\put(8,20){\line(0,1){4}}
\put(24,20){\line(1,0){4}}
\put(12,16){\line(1,0){4}}
\put(12,16){\line(0,1){4}}
\put(16,12){\line(1,0){8}}

\dashline[+60]{1}(0,0)(4,0)
\dashline[+60]{1}(4,0)(4,4)
\dashline[+60]{1}(4,4)(8,4)
\dashline[+60]{1}(8,4)(8,8)
\dashline[+60]{1}(8,8)(16,8)
\dashline[+60]{1}(12,12)(16,12)
\dashline[+60]{1}(8,16)(12,16)
\dashline[+60]{1}(4,20)(8,20)
\dashline[+60]{1}(0,24)(4,24)
\dashline[+60]{1}(0,28)(4,28)
\dashline[+60]{1}(0,28)(0,0)
\dashline[+60]{1}(4,24)(4,20)
\dashline[+60]{1}(8,20)(8,16)
\dashline[+60]{1}(12,16)(12,12)
\dashline[+60]{1}(16,12)(16,8)

\put(0.5,25){DP}

\put(4.5,21){$\ddots$}
\put(8.5,17){$\ddots$}
\put(12.5,13){DP}
\put(17,13){P}
\put(17.5,17){\vdots}
\put(17.5,21){\vdots}
\put(17,25){P}
\put(22,23){B}
\put(4,11){NB}
\put(11,23){B}

\end{picture}
\end{center}
\noindent
We write $P(\Dmu)$ for the set of hooks corresponding to the parts of
$\mu$ (denoted by P above), and ${\cal P}(\Dmu)$ for the set of their lengths.

\noindent
We write $DP(\Dmu)$ for the set of hooks corresponding to the 
doubled parts of $\mu$ (denoted by DP above), and ${\cal{DP}}(\Dmu)$ 
for the set of their lengths.

\noindent
We write $B(\Dmu)$ for the set of hooks corresponding to bars in $\mu$
which are not parts (denoted by B above), and ${\cal B}(\Dmu)$ for the
multiset of their lengths.

\noindent
We write $NB(\Dmu)$ for the set of hooks corresponding to ``non-bars''
(denoted by NB above), i.e. the counterparts in the lower half of the 
Young diagram of the bars in $\mu$ which are not parts, 
and ${\cal{NB}}(\Dmu)$ for the multiset of their lengths. 
In particular, by construction, we have ${\cal{NB}}(\Dmu)={\cal B}(\Dmu)$.

\smallskip
We thus have the set equality $H(\Dmu)=P(\Dmu) \cup DP(\Dmu) \cup
B(\Dmu) \cup NB(\Dmu)$ and the multiset equality 
${\cal H}(\Dmu)={\cal P}(\Dmu) \cup {\cal{DP}}(\Dmu) \cup {\cal
  B}(\Dmu) \cup {\cal{NB}}(\Dmu)$. 
Note that $B(\Dmu)$ is the set of unmixed bars of type 1 and 
mixed bars (of type 3) in $\mu$, while $P(\Dmu)$ is the set of 
unmixed bars of type 2 in $\mu$ 
(see e.g. \cite[Section 4]{Olsson-Combinatorics}). 
In particular, we have for the multiset of bar lengths in 
$\mu$, that ${\cal B}(\mu)={\cal P}(\Dmu)  \cup {\cal B}(\Dmu)$.

\smallskip
For any $0 \leq i , \, j \leq d-1$, we have defined in 
Section \ref{doubling} subsets $H_{i \rightarrow j}(\Dmu)$, 
$H_{\{i\}}(\Dmu)$ and $H_{\{ij\}}(\Dmu)$ (if $i \neq j$) of
$H(\Dmu)$. Similarly, we define subsets $P_{i \rightarrow j}(\Dmu)$, 
$P_{\{ij\}}(\Dmu)$, $NP_{i \rightarrow j}(\Dmu)$, $NP_{\{ij\}}(\Dmu)$,
$B_{i \rightarrow j}(\Dmu)$, $B_{\{ij\}}(\Dmu)$, 
$NB_{i \rightarrow j}(\Dmu)$ and $NB_{\{ij\}}(\Dmu)$.

\smallskip
As before, for any hook $z$ in a partition we let $h(z)$ denote its length.
For any $z \in B(\Dmu)$, we denote by $z^*$ the counterpart of 
$z$ in $NB(\Dmu)$ (so that $h(z)=h(z^*)$). In particular, we have 
$NB(\Dmu)=B(\Dmu)^*$. For any $1 \leq i \leq d-1$, we let $i^*=d-i$ 
(so that, in the $d$-quotient 
$(D(\mu_0), \, \mu_1, \, \ldots , \, \mu_{d-1})$ of $\Dmu$, 
we have, for each $1 \leq i \leq d-1$, $\mu_{i^*}=\mu_i^*$). We also let $0^*=0$.

\begin{lemma}\label{starhook}
For any $z \in B(\Dmu)$, if $z \in B_{i \rightarrow j}(\Dmu)$ 
(for some $0 \leq i, \, j \leq d-1$), 
then $z^* \in NB_{j^* \rightarrow i^*}(\Dmu)$.
\end{lemma}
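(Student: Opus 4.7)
The approach is to transport the statement to the $\beta$-set of $D(\mu)$. Fix a $d$-normalized $\beta$-set $X$ of $D(\mu)$, so $s := |X|$ is a multiple of $d$. Under the canonical bijection between hooks of $D(\mu)$ and pairs $(a,b)$ with $a \in X$, $b \notin X$, $a > b$, the hand node of a hook $z$ has $d$-residue $[a]_d$ while the foot node has residue $[b+1]_d$. Thus $z \in B_{i \to j}(D(\mu))$ is equivalent to $[a]_d = i$ and $[b]_d = j$, and the conclusion $z^* \in NB_{j^* \to i^*}(D(\mu))$ reduces to exhibiting a $\beta$-set pair $(a^*, b^*)$ for $z^*$ with $[a^*]_d \equiv -[b]_d$ and $[b^*]_d \equiv -[a]_d \pmod{d}$; under the convention $0^* = 0$, this is the same as $[a^*]_d = j^*$ and $[b^*]_d = i^*$.

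The work is then to make $(a^*, b^*)$ explicit. Unpacking Section~\ref{doubling}, the beads of $X$ at positions $\geq s$ are exactly $\{a_i + s : 1 \leq i \leq m\}$, and the empty positions in $\{0, 1, \ldots, s - 1\}$ are exactly $\{s - a_i : 1 \leq i \leq m\}$. Since $z \in B(D(\mu))$ represents a bar of $\mu$ which is not a part, it is either a mixed (type 3) bar of length $a_r + a_{r'}$ with $r \neq r'$, or an unmixed type 1 bar of length $h$ with $h < a_r$. In the first case, taking $r < r'$, one has $(a, b) = (a_r + s,\, s - a_{r'})$ with counterpart $(a^*, b^*) = (a_{r'} + s,\, s - a_r)$; in the second, $(a, b) = (a_r + s,\, a_r - h + s)$ with counterpart $(a^*, b^*) = (s - a_r + h,\, s - a_r)$. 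A short direct verification shows that in both cases the involution $\sigma \colon p \mapsto 2s - p$ sends $a$ to $b^*$ and $b$ to $a^*$.

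Once this is established, the conclusion is immediate: since $s \equiv 0 \pmod{d}$, $\sigma$ negates residues modulo $d$, so $[a^*]_d = [\sigma(b)]_d \equiv -[b]_d \equiv j^*$ and $[b^*]_d = [\sigma(a)]_d \equiv -[a]_d \equiv i^* \pmod{d}$, giving $z^* \in NB_{j^* \to i^*}(D(\mu))$ as required. The main obstacle I anticipate is the bookkeeping underlying the previous paragraph: one must identify correctly which pair $(a, b)$ attaches to each hook of $B(D(\mu))$ via the Macdonald--Morris--Yaseen doubling, and verify that $(\sigma(b), \sigma(a))$ lies in $X \times (\N \setminus X)$ and is the $\beta$-set pair of the correct counterpart $z^* \in NB(D(\mu))$. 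Once this dictionary is set up, the residue identity becomes a one-line consequence of $d \mid s$.
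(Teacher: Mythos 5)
Your proof is correct, and it takes a genuinely different route from the paper's. The paper pins down the foot residue of $z^*$ geometrically: the arm of $z$ lies in the $r$th row of $D(\mu)$ (a row corresponding to a part of $\mu$), the leg of $z^*$ lies in the $r$th column, which is one node shorter, so the end residue $i$ of that row forces end residue $[i^*+1]_d$ for that column; the hand residue of $z^*$ is then recovered from $h(z)=h(z^*)$ together with the congruences $h(z)\equiv i-j$ and $h(z^*)\equiv k-\ell \pmod{d}$. You instead compute both coordinates of $z^*$ at once on the abacus, using the explicit bead--gap pairs $(s+a_r,\,s-a_{r'})\leftrightarrow(s+a_{r'},\,s-a_r)$ and $(s+a_r,\,s+a_r-h)\leftrightarrow(s-a_r+h,\,s-a_r)$ together with the involution $p\mapsto 2s-p$, which negates residues because $d\mid s$. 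I checked the bookkeeping you flag as the main obstacle: the beads of $X$ at positions $\geq s$ are indeed $\{s+a_i\}$ and the gaps below $s$ are $\{s-a_i\}$ (the standard $\beta$-set reading of the Frobenius symbol $(a_1,\dots,a_m\,|\,a_1-1,\dots,a_m-1)$); the pairs you list are exactly the hooks of $B(D(\mu))$ and their counterparts in $NB(D(\mu))$ as described in Sections 2 and 3; and the images under $p\mapsto 2s-p$ are genuine bead--gap pairs (for the type 1 case, $s-a_r+h\in X$ precisely because $h\neq a_r-a_{r'}$ for every $r'$). Your argument is more explicit and self-contained, and it makes the near-self-conjugacy of $D(\mu)$ transparent, at the cost of setting up the Frobenius/$\beta$-set dictionary; the paper's argument is shorter given the rim-and-residue machinery it develops anyway for Lemma~\ref{xi}, whose proof reuses the same row-versus-column observation.
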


\begin{proof}
Suppose $z^* \in NB_{k \rightarrow \ell}(\Dmu)$. By definition, 
$z$ has hand residue $i$ and foot residue $[j+1]_d$, 
while $z^*$ has hand residue $k$ and foot residue $[\ell +1]_d$. 
In particular, considering the lengths $h(z)$ and $h(z^*)$ of 
$z$ and $z^*$, we have $h(z) \equiv i-j \modd$ and 
$h(z^*) \equiv k - \ell \modd$. But, since $h(z)=h(z^*)$, 
we have $i-j \equiv k - \ell \modd$.

Now, since $z \in B(\Dmu)$, the arm of $z$ is in a row which 
corresponds to a part of $\mu$, say the $r$th row of the Young 
diagram of $\Dmu$. Then, by construction of $\Dmu$, the counterpart 
$z^*$ of $z$ has its leg in the $r$th column of the Young diagram 
of $\Dmu.$ Also, by construction, this column is one node shorter 
than the $r$th row. It is then easy to see that, 
if the $r$th row has end residue $m$, then the $r$th column has end residue 
$[d-(m-1)]_d=[(d-m)+1]_d=[m^*+1]_d$ (indeed, we know that 
the residues increase from left to right in a row while 
they decrease from top to bottom in a column, and that the 
$r$th row and $r$th column intersect on a diagonal node 
of residue $0$). Therefore the foot residue of 
$z^*$ is $[i^*+1]_d$, where $i$ is the hand residue of $z$. 
Hence $\ell=i^*$. But then, from $h(z)=h(z^*)$, we obtain 
$i-j \equiv k-i^* \modd$, whence, since $i^*=d-i$, 
$k \equiv -j \modd$, and thus $k = j^*$.

\end{proof}

\begin{corollary}\label{star}
For any $0 \leq i, \, j \leq d-1$, we have 
$NB_{j^* \rightarrow i^*}(\Dmu)= B_{i \rightarrow j}(\Dmu)^*$ 
and ${\cal{NB}}_{j^* \rightarrow i^*}(\Dmu)= {\cal B}_{i \rightarrow j}(\Dmu)$.
\end{corollary}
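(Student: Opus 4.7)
The plan is to deduce Corollary \ref{star} directly from Lemma \ref{starhook} together with a simple counting/partition argument, and then transfer set-equality to multiset-equality via the length-preserving property of the counterpart map.

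First I would observe that, by definition, the map $z \mapsto z^*$ is a bijection from $B(\Dmu)$ onto $NB(\Dmu)$: every hook in $B(\Dmu)$ has a unique counterpart in the lower half of the Young diagram, and vice versa. Moreover, the sets $\{B_{i \rightarrow j}(\Dmu)\}_{0 \leq i,j \leq d-1}$ partition $B(\Dmu)$, and similarly the sets $\{NB_{k \rightarrow \ell}(\Dmu)\}_{0 \leq k,\ell \leq d-1}$ partition $NB(\Dmu)$, since each hook in $\Dmu$ belongs to exactly one $H_{i \rightarrow j}(\Dmu)$ according to the $d$-residues of its hand and foot nodes.

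Next, Lemma \ref{starhook} asserts that the restriction of $*$ to $B_{i \rightarrow j}(\Dmu)$ lands inside $NB_{j^* \rightarrow i^*}(\Dmu)$, i.e.
\[
B_{i \rightarrow j}(\Dmu)^* \subseteq NB_{j^* \rightarrow i^*}(\Dmu).
\]
Since $(i,j) \mapsto (j^*,i^*)$ is a bijection on $\{0,\ldots,d-1\}^2$, and since both sides partition respectively $B(\Dmu)^* = NB(\Dmu)$ and $NB(\Dmu)$, a standard partition argument forces equality: if any one of these containments were strict, then applying $*^{-1}$ we would obtain strict containments contradicting the fact that $B(\Dmu)$ is the disjoint union of the $B_{i \rightarrow j}(\Dmu)$. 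This gives the first equality $NB_{j^* \rightarrow i^*}(\Dmu) = B_{i \rightarrow j}(\Dmu)^*$.

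For the multiset statement, I would simply recall that $h(z) = h(z^*)$ holds for every $z \in B(\Dmu)$ by construction of the counterpart. Therefore the length-preserving bijection $* : B_{i \rightarrow j}(\Dmu) \to NB_{j^* \rightarrow i^*}(\Dmu)$ identifies the two multisets of hook lengths, yielding $\cNB_{j^* \rightarrow i^*}(\Dmu) = \cB_{i \rightarrow j}(\Dmu)$. There is no serious obstacle here; the only point to handle carefully is the partition/bookkeeping step, ensuring that the inclusions given by Lemma \ref{starhook} are in fact equalities because both families are partitions of sets already known to be in length-preserving bijection via $*$.
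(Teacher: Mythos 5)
Your argument is correct and is essentially the intended one: the paper states Corollary \ref{star} without proof as an immediate consequence of Lemma \ref{starhook}, and your partition-plus-bijection bookkeeping (the $B_{i \rightarrow j}(\Dmu)$ partition $B(\Dmu)$, the $NB_{k \rightarrow \ell}(\Dmu)$ partition $NB(\Dmu)$, $(i,j) \mapsto (j^*,i^*)$ is a bijection of index pairs, so the piecewise inclusions from the lemma are forced to be equalities) is exactly the standard way to make that immediacy precise. The passage to multisets via $h(z)=h(z^*)$ is likewise correct.
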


We now prove a symmetry property on the number of beads in the abacus
of $\Dmu$. We suppose that the $d$-abacus of $\Dmu$ is 
{\emph{minimally normalized}}, i.e. that the $\beta$-set for $\Dmu$ 
used to build the abacus has a multiple of $d$ elements, and is
minimal with respect to this property. 
For each $0 \leq i \leq d-1$, we write $x_i$ for the number of 
beads on the $i$th runner of the (minimally normalized) $d$-abacus of $\Dmu$.

\begin{lemma}\label{xi}
For any $0 \leq i, \, j \leq d-1$, we have $x_i+x_{i^*}=x_j+x_{j^*}$.

\end{lemma}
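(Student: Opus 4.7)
The plan is to compute the bead counts $x_r$ explicitly from the Frobenius description of $D(\mu)$ and then read off the claimed symmetry directly. Since $D(\mu) = (a_1, \ldots, a_m \,|\, a_1 - 1, \ldots, a_m - 1)$ has length $a_1$, the minimally normalized $\beta$-set of $D(\mu)$ has size $N$ equal to the smallest multiple of $d$ satisfying $N \geq a_1$. Translating this Frobenius description into a $\beta$-set (cf.\ \cite[Section 1]{Olsson-Combinatorics}) yields the explicit form
\[
X \;=\; \{a_i + N : 1 \leq i \leq m\} \;\cup\; \bigl(\{0, 1, \ldots, N-1\} \setminus \{N - a_i : 1 \leq i \leq m\}\bigr),
\]
where the $m$ ``top'' beads $a_i + N$ come from the arm lengths of the principal hooks of $D(\mu)$ and the $m$ excluded bottom positions $N - a_i$ come from the corresponding leg lengths $a_i - 1$.

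Now set $A_r := \#\{1 \leq i \leq m : a_i \equiv r \modd\}$. Because $N \equiv 0 \modd$, the top bead $a_i + N$ lies on runner $a_i \bmod d$, while the excluded position $N - a_i$ lies on runner $(-a_i) \bmod d$. Observing that $-a_i \equiv r \modd$ is equivalent to $a_i \equiv r^* \modd$, runner $r$ acquires exactly $A_r$ top beads and loses exactly $A_{r^*}$ positions from $\{0, \ldots, N-1\}$. Since $\{0, 1, \ldots, N-1\}$ contributes $N/d$ positions to each runner, this gives
\[
x_r \;=\; A_r + \frac{N}{d} - A_{r^*}.
\]

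The lemma then follows at once: for every $r$,
\[
x_r + x_{r^*} \;=\; \left(A_r + \frac{N}{d} - A_{r^*}\right) + \left(A_{r^*} + \frac{N}{d} - A_r\right) \;=\; \frac{2N}{d},
\]
which is independent of $r$ (and equals $2 x_0 = 2N/d$, since $x_0 = A_0 + N/d - A_0 = N/d$). I expect the main work to lie in the first step, namely verifying the explicit shape of $X$ from the Frobenius data: one must check that $N \geq a_1$ keeps the top beads $a_i + N$ strictly above $N - 1$, that each excluded position $N - a_i$ lies inside $\{0, \ldots, N - 1\}$, and that these $m$ excluded positions are pairwise distinct (which is guaranteed by the distinctness of the parts of $\mu$). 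Once this dictionary is in place, the conclusion is pure residue bookkeeping.
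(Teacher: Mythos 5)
Your proof is correct, and it takes a genuinely different route from the paper's. You compute the minimally normalized $\beta$-set of $D(\mu)$ explicitly from the Frobenius symbol, and the dictionary you flag as the main point does check out: the beads $\geq N$ are the $\lambda_j+N-j$ with $\lambda_j\geq j$, i.e.\ exactly $\{a_j+N : 1\leq j\leq m\}$, and since the $i$th principal hook of $D(\mu)$ has length $2a_i$ with hand bead $a_i+N$, each $N-a_i$ is a non-bead; being $m$ distinct elements of $\{0,\dots,N-1\}$, these account for all $m$ non-beads below $N$. The resulting closed formula $x_r=A_r+N/d-A_{r^*}$, hence $x_r+x_{r^*}=2N/d$ for every $r$, is more quantitative than the lemma itself (and matches the paper's example, where $x_0=3$, $x_1=2$, $x_2=4$ and $2N/d=6$). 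The paper argues instead on the extended rim $\widetilde{R}$ of the Young diagram of $D(\mu)$: labelling its $2\ell d$ segments by residues gives $|V_j|+|H_j|=2\ell$, the near-symmetry of $D(\mu)$ about the diagonal gives $|V_j|=|H_{j^*}|$, and the claim follows from $|V_j|=x_j$ or $x_j+1$. Each approach buys something: yours yields explicit bead counts and, as a by-product, immediately locates the part-hooks in $P_{i\rightarrow 0}(D(\mu))$ and the doubled-part-hooks in $DP_{i\rightarrow i^*}(D(\mu))$, since the part $a_i$ pairs the bead $a_i+N$ with the non-bead $N$ while its double pairs the same bead with $N-a_i$; the paper's rim labelling is precisely what is reused to establish those facts in the proof of Corollary~\ref{doubleparts}, so if your proof were substituted for the paper's, that by-product should be recorded explicitly to keep the later argument intact.
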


In the example of Section \ref{doubling}, we have $x_0=3$, $x_1=2$ and
$x_2=4$, whence $x_0+x_{0^*}=x_1+x_{1^*}=x_2+x_{2^*}=6$.

\begin{proof}
Let $a$ be the largest part of $\mu$, and $[a]_d$ its $d$-residue. 
Consider the {\emph{rim}} $R$ of the Young diagram of $\Dmu$. 
Then $R$ is composed of $a+1$ horizontal segments (of length 1) and 
$a$ vertical segments. 

We extend the rim horizontally to the top right and vertically 
to the bottom left as follows. Suppose $(k-1)d < a \leq kd$ for some 
$k \in \N$. If $a+1 \leq kd$ (i.e. $[a]_d \neq 0$), then extend $R$ to
$\widetilde{R}$ which has $kd$ horizontal segments and $kd$ vertical ones. 
If $a+1 > kd$ (i.e. $a=kd$), then extend $R$ to $\widetilde{R}$ which has
$(k+1)d$ horizontal segments and $(k+1)d$ vertical ones. 
In particular, $\widetilde{R}$ always has $\ell d$ vertical segments and 
$\ell d$ horizontal ones, with $\ell d >a$. This implies that, while 
the horizontal extension of $R$ may be empty (if $a+1=kd$), the
vertical one never is. In fact, the horizontal extension is always 
exactly one segment shorter than the vertical one, which is 
$\ell d -a$ segment long ($\ell d-a \neq 0$, 
and $\ell d - a - 1=0 \Longleftrightarrow a+1 = kd$).

Label the vertical segment at the end of the first row by its 
$d$-residue $[a]_d$. Complete the labelling of each segment of $\widetilde{R}$ 
by residues modulo $d$ by increasing by 1 for each step to the top or 
right, and decreasing by 1 for each step to the bottom or left. In 
particular, a row has end residue $j$ if and only if the corresponding
vertical segment of $\widetilde{R}$ is labelled by $j$, while a column has end 
residue $j$ if and only if the corresponding horizontal segment of 
$\widetilde{R}$ is labelled by $[j-1]_d$.

For each $0 \leq j \leq d-1$, let $V_j$ (respectively $H_j$) be the 
set of vertical (respectively horizontal) segments of $\widetilde{R}$ 
labelled by $j$. We thus have
\begin{equation}\label{V}|V_j|=
\left\{ \begin{array}{cl} x_j & \mbox{if} \; 
(k-1)d<a<kd \\ x_j+1 & \mbox{if} \; a=kd \end{array} \right. 
 \; \; \; (0 \leq j \leq d-1). \end{equation}
By construction of $\Dmu$ (see also the proof of Lemma
\ref{starhook}), we see that the horizontal segment at the bottom 
of the first column is labelled by $[a]_d^*$. Since the vertical 
extension of $R$ has $\ell d -a$ segments, the bottom one is labelled 
by $[[a]_d^*-(\ell d -a)]_d=0$. And, since the horizontal extension 
has $\ell d -a -1$ segments, the last one (if it exists, i.e. if 
$\ell d -a -1 \neq 0$) is labelled by $[a + \ell d - a -1]_d=d-1$.

Following $\widetilde{R}$ from bottom left to top right, the labelling residues 
increase by one at each step, going (by the above) from $0$ to $d-1$, 
and this $2 \ell$ times (since $\widetilde{R}$ has $2 \ell d$ segments). 
This shows that, for each $0 \leq j \leq d-1$, the total number of 
segments of $\widetilde{R}$ labelled by $j$ is $2 \ell$, i.e.
\begin{equation}\label{VandH} |V_j| + |H_j| = 2 \ell \; \; \; \; \; 
(0 \leq j \leq d-1). \end{equation}

The nodes on the diagonal all have residue 0. Thus the column of 
nodes which corresponds to the parts of $\mu$ has end residue 1 
(since immediately to the right of the diagonal). Hence the
corresponding horizontal segment is labelled by 0. 
We have the following picture

\setlength{\unitlength}{1.2mm}

\linethickness{0.2mm}

\begin{center}

\begin{picture}(54,50)

\put(14,48){\line(1,0){28}}
\put(14,48){\line(0,-1){4}}

\put(26,48){\line(0,-1){16}}
\put(30,48){\line(0,-1){16}}

\put(26,36){\line(1,0){4}}

\put(14,44){\line(1,0){4}}
\put(18,40){\line(1,0){4}}
\put(18,40){\line(0,1){4}}
\put(22,36){\line(1,0){4}}
\put(22,36){\line(0,1){4}}

\dashline[+60]{1}(22,32)(26,32)
\dashline[+60]{1}(18,36)(22,36)
\dashline[+60]{1}(14,40)(18,40)
\dashline[+60]{1}(10,44)(14,44)
\dashline[+60]{1}(10,48)(14,48)
\dashline[+60]{1}(10,48)(10,20)
\dashline[+60]{1}(14,44)(14,40)
\dashline[+60]{1}(18,40)(18,36)
\dashline[+60]{1}(22,36)(22,32)

\put(10.8,45){$0$}

\put(13.8,41){$\ddots$}

\put(17.8,37){$\ddots$}

\put(22.8,33){$0$}

\put(27,33){$1$}

\linethickness{0.7mm}

\put(42,48){\line(1,0){12}}

\put(42,48){\line(0,-1){4}}
\put(38,44){\line(0,-1){4}}
\put(34,40){\line(0,-1){8}}

\put(26,32){\line(1,0){8}}
\put(38,44){\line(1,0){4}}

\put(34,40){\line(1,0){4}}

\put(14,20){\line(0,1){4}}
\put(18,24){\line(0,1){4}}
\put(26,28){\line(0,1){4}}
\put(10,20){\line(1,0){4}}
\put(14,24){\line(1,0){4}}
\put(18,28){\line(1,0){8}}

\put(10,20){\line(0,-1){16}}

\put(10,16){\line(-1,0){1}}
\put(10,12){\line(-1,0){1}}
\put(10,8){\line(-1,0){1}}
\put(10,4){\line(-1,0){1}}

\put(46,48){\line(0,1){1}}
\put(50,48){\line(0,1){1}}
\put(54,48){\line(0,1){1}}

\put(30,32){\line(0,-1){1}}

\put(27.5,29.5){\scriptsize{{\bf{$0$}}}}

\put(43,45.5){\scriptsize{{\bf{$[a]_d$}}}}
\put(40,49.5){\scriptsize{{\bf{$[a]_d+1$}}}}
\put(50,49.5){\scriptsize{{\bf{$d-1$}}}}
\put(11,17.5){\scriptsize{{\bf{$[a]_d^*$}}}}
\put(1,17.5){\scriptsize{{\bf{$[a]_d^*-1$}}}}
\put(7,5.5){\scriptsize{{\bf{$0$}}}}
\put(7,9.5){\scriptsize{{\bf{$1$}}}}

\end{picture}
\end{center}

Now the portions of $\widetilde{R}$ to the right and to the left of this 0 
(except the bottom left segment) are symmetric (by construction 
of $\Dmu$, and by the above considerations on the horizontal and 
vertical extensions). The vertical segments of one are in bijection 
with the horizontal ones of the other, and any label $j$ is sent to 
a label $j^*$. Adding the (horizontal) 0 in the middle and the 
(vertical) 0 at the bottom, this proves that $|V_j|=|H_{j^*}|$ for 
each $0 \leq j \leq d-1$. Together with (\ref{VandH}), this yields
$$|V_j|+|V_{j^*}|=|V_j|+|H_j|=2 \ell \; \; \; \; \; (0 \leq j \leq d-1).$$
Using (\ref{V}), this implies the result.
\end{proof}

This has several important consequences in our context. Let $\la$ be any bar
partition, and $\bdqla$ be its $\bd$-quotient partition. Following
\cite[Theorem 4.7]{BeGrOl}, we define, for each hook $z \in
H(D(\bdqla))$, the modified hook length $\overline{h}(z)$ by
$\overline{h}(z)=h(z)+(x_i-x_j)d$ if $z$ has hand residue $i$ and foot
residue $[j+1]_d$. We then write $\overline{\cal H}(D(\bdqla))= \{ |
\overline{h}(z)| \; | \, z \in H(D(\bdqla)) \}$ (note that the same
multiset is denoted by abs$(\overline{\cal H}(D(\bdqla)))$ in
\cite{BeGrOl}). Subsets of modified hook lengths are defined 
similarly for the subsets of hooks we introduced earlier.

\begin{corollary}\label{modifiedlength}
For any bar partition $\la$ and any $z \in B(D(\bdqla)$, 
we have $\overline{h}(z)=\overline{h}(z^*)$. In particular, 
for any $0 \leq i, \, j \leq d-1$, we have 
$$\overline{{\cal B}}_{i \rightarrow j} (D(\bdqla))
=\overline{{\cal{NB}}}_{j^* \rightarrow i^*} (D(\bdqla)).$$
\end{corollary}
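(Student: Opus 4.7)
The plan is to reduce the statement to a direct combination of Lemma~\ref{starhook} and Lemma~\ref{xi}, applied to the partition $D(\bdqla)$. Since $\bdqla$ has empty $\bd$-core, $D(\bdqla)$ has empty $d$-core, so the $d$-abacus has the same number of beads on every runner; in particular the numbers $x_i$ make sense and are well-defined after minimal normalization, and Lemma~\ref{xi} applies.

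First, I would fix any $z \in B(D(\bdqla))$ and write $z \in B_{i \rightarrow j}(D(\bdqla))$ for suitable $0 \leq i,j \leq d-1$. By Lemma~\ref{starhook}, its counterpart $z^*$ lies in $NB_{j^* \rightarrow i^*}(D(\bdqla))$. Thus from the definition of the modified hook length one has
\[
\overline{h}(z) = h(z) + (x_i - x_j)d, \qquad \overline{h}(z^*) = h(z^*) + (x_{j^*} - x_{i^*})d.
\]
By the very construction of the doubling one has $h(z) = h(z^*)$, so establishing $\overline{h}(z) = \overline{h}(z^*)$ amounts to the equality $x_i - x_j = x_{j^*} - x_{i^*}$, which is exactly a rearrangement of Lemma~\ref{xi} applied to $D(\bdqla)$. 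This yields the first assertion.

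For the second assertion, I would combine the first part with Corollary~\ref{star}: the map $z \mapsto z^*$ is a bijection $B_{i \rightarrow j}(D(\bdqla)) \to NB_{j^* \rightarrow i^*}(D(\bdqla))$, and under this bijection the modified hook lengths coincide. Passing to absolute values (which is how $\overline{\cal H}$ and hence $\overline{\cal B}_{i\rightarrow j}$ and $\overline{\cal{NB}}_{j^*\rightarrow i^*}$ are defined in \cite{BeGrOl}) preserves this equality, so the two multisets of modified hook lengths are identical.

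The proof is essentially a bookkeeping exercise; there is no genuine obstacle once Lemmas~\ref{starhook} and~\ref{xi} are in hand. The only subtlety worth checking is that the normalization of the $d$-abacus used to define the $x_i$'s is the same one implicitly used in the definition of $\overline{h}$, so that plugging the integers $x_i,x_j,x_{i^*},x_{j^*}$ into both expressions $\overline{h}(z)$ and $\overline{h}(z^*)$ is legitimate; this is guaranteed by our standing convention that the abacus is minimally normalized.
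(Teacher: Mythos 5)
Your proof is correct and follows essentially the same route as the paper: apply Lemma~\ref{starhook} to locate $z^*$ in $NB_{j^*\rightarrow i^*}(D(\bdqla))$, use $h(z)=h(z^*)$ together with the rearrangement $x_i-x_j=x_{j^*}-x_{i^*}$ of Lemma~\ref{xi}, and then invoke Corollary~\ref{star} for the multiset equality. Your added remark about the normalization of the abacus is a reasonable (if unnecessary in the paper's setup) point of care.
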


\begin{proof}
Suppose $z \in  B_{i \rightarrow j} (D(\bdqla))$ 
(for some $0 \leq i, \, j \leq d-1$). Then, by 
Lemma \ref{starhook}, $z^* \in  NB_{j^* \rightarrow i^*}
(D(\bdqla))$. 
We thus have $\overline{h}(z)=h(z)+(x_i-x_j)d$ and 
$\overline{h}(z^*)=h(z^*)+(x_{j^*}-x_{i^*})d$. 
But $h(z)=h(z^*)$ and, by Lemma \ref{xi}, $x_i-x_j=x_{j^*}-x_{i^*}$. 
Hence $\overline{h}(z)=\overline{h}(z^*)$. Corollary \ref{star} 
concludes the proof.
\end{proof}

\begin{corollary}\label{doubleparts}
For any bar partition $\la$ and any $z \in P(D(\bdqla))$, we let $z
^{\times 2}$ be the corresponding element in $DP(D(\bdqla))$, satisfying
$h(z^{\times 2})=2h(z).$ We then have 
$\overline{h}(z^{\times 2})=2\overline{h}(z)$.
\end{corollary}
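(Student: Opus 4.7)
The plan is to locate $z$ and $z^{\times 2}$ inside $D(\bdqla)$ explicitly, read off their hand and foot $d$-residues, and reduce the desired identity to Lemma~\ref{xi}. Writing $\bdqla = (b_1 > \cdots > b_{m'} > 0)$, the doubled part $z^{\times 2}$ is the $i$-th principal hook at the diagonal cell $(i,i)$: its arm is $b_i$ and its leg is $b_i-1$, so its hand sits at $(i, i+b_i)$ and its foot at $(i+b_i-1, i)$. The part hook $z$ of length $b_i$ sits at $(i, m'+1)$: since the parts of $\bdqla$ are strictly decreasing with $b_{m'} \geq 1$, one has $b_j + j \geq m'+1$ for all $j \leq m'$, while no row below the diagonal reaches column $m'+1$, so column $m'+1$ of $D(\bdqla)$ has length exactly $m'$. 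Hence the hook at $(i, m'+1)$ has arm $b_i + i - m' - 1$, leg $m' - i$, total length $b_i$, with hand at $(i, i+b_i)$ and foot at $(m', m'+1)$.

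Using the rule that the cell $(r,c)$ has residue $[c-r]_d$, I read off: $z$ has hand residue $[b_i]_d =: k$ and foot residue $1$, so $z \in H_{k \rightarrow 0}(D(\bdqla))$; while $z^{\times 2}$ has hand residue $k$ and foot residue $[1-b_i]_d = [k^*+1]_d$, giving $z^{\times 2} \in H_{k \rightarrow k^*}(D(\bdqla))$. Plugging into the definition of the modified hook length gives $\overline{h}(z) = b_i + (x_k - x_0) d$ and $\overline{h}(z^{\times 2}) = 2b_i + (x_k - x_{k^*}) d$, so the equality $\overline{h}(z^{\times 2}) = 2\overline{h}(z)$ reduces to $x_k + x_{k^*} = 2x_0 = x_0 + x_{0^*}$, which (using $0^* = 0$) is exactly Lemma~\ref{xi}.

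The main obstacle I expect is the geometric step of placing $z$ correctly at $(i, m'+1)$ and checking that column $m'+1$ of $D(\bdqla)$ has height exactly $m'$; once that is in hand and the two hooks are seen to share a hand runner $k$ while their foot runners sit at $0$ and $k^*$, the symmetric bead-count identity of Lemma~\ref{xi} precisely absorbs the factor of $2$ coming from $h(z^{\times 2}) = 2 h(z)$.
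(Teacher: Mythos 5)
Your proposal is correct and follows essentially the same route as the paper: identify the hand/foot runners of $z$ and $z^{\times 2}$ (namely $k\rightarrow 0$ and $k\rightarrow k^*$), plug into the definition of $\overline{h}$, and reduce to the symmetry $x_k+x_{k^*}=x_0+x_{0^*}$ of Lemma~\ref{xi}. The only difference is presentational: you verify the runner placement by explicit coordinates of the hooks at $(i,m'+1)$ and $(i,i)$, whereas the paper extracts the same facts from the rim-labelling argument in the proof of Lemma~\ref{xi}.
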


\begin{proof}
Take any  $z \in P(D(\bdqla)).$ The proof of
Lemma \ref{xi} shows the following: if  $h(z) \equiv i \modd$ for some 
$0 \leq i \leq d-1$, then $z \in P_{i \rightarrow 0}(D(\bdqla))$
and $z^{\times 2} \in DP_{i \rightarrow i^*}(D(\bdqla))$. 
By definition, we therefore get that 
$\overline{h}(z)=  h(z) + d (x_i-x_{0})$ and 
$\overline{h}(z^{\times 2})= 2h(z) + d (x_i-x_{i^*})$. Now, by Lemma \ref{xi}, we see
that $x_i-x_{i^*}=2(x_i-x_0),$  so that 
$\overline{h}(z^{\times 2})= 2h(z) + d (x_i-x_{i^*})=  2h(z) + 2d(x_i-x_0)= 2 \overline{h}(z)$.
\end{proof}

\begin{corollary}\label{distinctparts}
For any bar partition $\la$, the elements of 
$ \overline{{\cal P}}(D(\bdqla)$ are distinct, 
and the elements of $ \overline{{\cal DP}}(D(\bdqla)$ are distinct.

\end{corollary}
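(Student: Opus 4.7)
The plan is to deduce both assertions from the explicit formula for $\overline{h}$ on $P(D(\bdqla))$ that is recorded in the proof of Corollary \ref{doubleparts}, combined with Lemma \ref{xi}. I would prove distinctness of $\overline{{\cal P}}(D(\bdqla))$ first, and then obtain the statement for $\overline{{\cal{DP}}}(D(\bdqla))$ as an immediate consequence of the relation $\overline{h}(z^{\times 2}) = 2\overline{h}(z)$ furnished by Corollary \ref{doubleparts}.

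For the parts, recall that $\bdqla$ is a bar partition, so its parts are distinct positive integers, and the hooks in $P(D(\bdqla))$ are in bijection with them via $z \mapsto h(z)$. As recorded in the proof of Corollary \ref{doubleparts}, if $z \in P(D(\bdqla))$ with $h(z) \equiv i \modd$, then $z \in P_{i \rightarrow 0}(D(\bdqla))$, and therefore $\overline{h}(z) = h(z) + (x_i - x_0)d$; in particular $\overline{h}(z) \equiv h(z) \modd$. Given two hooks $z, z' \in P(D(\bdqla))$ with $a := h(z) \neq h(z') =: a'$ and respective residues $i = [a]_d$ and $i' = [a']_d$, suppose for contradiction that $|\overline{h}(z)| = |\overline{h}(z')|$. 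If $\overline{h}(z) = \overline{h}(z')$, the identity $a - a' = (x_{i'} - x_i)d$ reduced modulo $d$ forces $i = i'$, and then $a = a'$, contradicting our assumption.

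The case $\overline{h}(z) = -\overline{h}(z')$ is the step I expect to be the main obstacle, since it is precisely this sign mismatch that could turn a genuine difference in the modified hook lengths into an equality after taking absolute values. Here one obtains $a + a' = (2x_0 - x_i - x_{i'})d$; reducing modulo $d$ then forces $i + i' \equiv 0 \modd$, which, since $0 \leq i, i' \leq d-1$, means $i' = i^*$ (with the convention $0^* = 0$). But Lemma \ref{xi} applied to the pair $\{i, i^*\}$ yields $x_i + x_{i^*} = x_0 + x_{0^*} = 2x_0$, so the right-hand side vanishes and $a + a' = 0$, contradicting the positivity of the parts. Thus $\overline{{\cal P}}(D(\bdqla))$ has distinct elements.

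Finally, the statement for $\overline{{\cal{DP}}}(D(\bdqla))$ follows at once: distinct hooks $z, z' \in P(D(\bdqla))$ give, via Corollary \ref{doubleparts}, elements $z^{\times 2}, (z')^{\times 2} \in DP(D(\bdqla))$ with $|\overline{h}(z^{\times 2})| = 2|\overline{h}(z)|$ and $|\overline{h}((z')^{\times 2})| = 2|\overline{h}(z')|$, which are distinct by the analysis just carried out.
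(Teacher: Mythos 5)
Your proof is correct and follows essentially the same route as the paper's: you use the formula $\overline{h}(z)=h(z)+(x_i-x_0)d$ for $z\in P_{i\rightarrow 0}(D(\bdqla))$, split into the cases $\overline{h}(z)=\pm\overline{h}(z')$, use residues modulo $d$ to force $i'=i$ or $i'=i^*$, invoke Lemma \ref{xi} to make the right-hand side vanish in the sign-flipped case, and deduce the $\overline{\cal{DP}}$ statement from Corollary \ref{doubleparts}. The only difference is cosmetic (you organize the cases by the sign of the equality rather than by the value of the second residue).
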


\begin{proof}
Suppose $z \in P_{i \rightarrow 0}(D(\bdqla))$, 
$z' \in P_{j \rightarrow 0}(D(\bdqla))$ and 
$|\overline{h}(z)|=|\overline{h}(z')|$. We want to show that
$h(z)=h(z'),$ which then implies $z=z'.$
We see from the definition that $|\overline{h}(z)| \equiv \pm i \modd$ 
and $|\overline{h}(z')| \equiv \pm j \modd$. 
Thus we must have $j=i$ or $j=i^*$. 
If $i=0$, then $j=0$, so that 
$\overline{h}(z)=h(z)$ and $\overline{h}(z')=h(z')$ and we are done. 
If we have $j=i \neq 0$, then if  
$\overline{h}(z)=\overline{h}(z')$ we obviously have that $h(z)=h(z').$ 
If $\overline{h}(z)=-\overline{h}(z')$ we get $i \equiv -i \modd,$ which is
impossible. 
Consider the case that $j=i^*$, $h(z) \equiv i \modd$ and 
$h(z') \equiv i^*\modd$. Then $\overline{h}(z)=-\overline{h}(z'),$ i.e. 
$h(z)+d(x_i-x_0)=-h(z')-d(x_{i^*}-x_0).$ Thus $h(z)+h(z')=d(x_0-x_i)+d(x_0-x_{i^*}).$ 
But the right hand side of this is 0, by Lemma  \ref{xi},  
which is impossible.

We have now shown that the elements of $ \overline{{\cal P}}(D(\bdqla)$ are 
distinct, and then Corollary \ref{doubleparts} shows that the
elements of $ \overline{{\cal DP}}(D(\bdqla)$ are distinct.

\end{proof}

\section{Main result}
We are now in position to prove our main result. 
Recall that, if $\la$ is a bar partition, then the multiset of bar lengths 
in $\la$ is ${\cal B}(\la)= {\cal B}(\Dla) \cup {\cal P}(\Dla)$. We will also write $\cP(\la)$ for the set $\cP(\Dla)$ of parts of $\la$. Also $ \overline{\cal B}(\bdqla)$ is the multiset of absolute 
values of the modified hook lengths $\overline{h}(z),$ $z \in  B(\bdqla). $ Finally, for any multiset $A$, we denote by $A^{\times 2}$ the multiset given by $A^{\times 2}=\{2a \; | \; a \in A \}$.

\begin{theorem}\label{main}
For any bar partition $\la$ and any odd integer $d \geq 3$, we 
have $${\cal B}(\la)={\cal B}(\lad) \cup \widetilde{\cal B}(\bdqla),$$
where $ \widetilde{\cal B}(\bdqla)= \ov{\cB}(\bdqla) \setminus [\cP(\lad) \cap \ov{\cP}(\bdqla)] \cup [\cP(\lad) \cap \ov{\cP}(\bdqla)]^{\times 2}$.
\end{theorem}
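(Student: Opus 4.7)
The plan is to apply the main result of \cite{BeGrOl} to the doubled partition $\Dla$ and then translate the resulting hook-length identity into a bar-length identity via the $P/DP/B/NB$ dissection developed in Section 3. First I would observe that the $d$-core of $\Dla$ is $\Dlad$ and its $d$-quotient partition is $\Dbdqla$, so that \cite[Theorem 4.7]{BeGrOl} yields the multiset equality $\cH(\Dla) = \cH(\Dlad) \cup \ov{\cH}(\Dbdqla)$.

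Next I would expand both sides using $\cH(\Dmu) = \cP(\Dmu) \cup \cDP(\Dmu) \cup \cB(\Dmu) \cup \cNB(\Dmu)$. For the unmodified terms (those arising from $\la$ and $\lad$), the symmetry of the doubled Young diagram across its main diagonal gives $\cNB(\Dmu) = \cB(\Dmu)$ as multisets, while the fact that each diagonal hook of $\Dmu$ has length twice the corresponding part of $\mu$ gives $\cDP(\Dmu) = \cP(\Dmu)^{\times 2}$. For the modified multiset $\ov{\cH}(\Dbdqla)$, the analogous identities $\ov{\cNB}(\Dbdqla) = \ov{\cB}(\Dbdqla)$ and $\ov{\cDP}(\Dbdqla) = \ov{\cP}(\Dbdqla)^{\times 2}$ are precisely Corollaries \ref{modifiedlength} and \ref{doubleparts}. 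Using also $\cB(\mu) = \cP(\Dmu) \cup \cB(\Dmu)$ from Section 3 together with the modified analogue $\ov{\cB}(\bdqla) = \ov{\cP}(\Dbdqla) \cup \ov{\cB}(\Dbdqla)$, the BeGrOl equality rearranges to
\[
\cB(\la) \cup \cP(\la)^{\times 2} \cup \cB(\Dla) = \cB(\lad) \cup \cP(\lad)^{\times 2} \cup \cB(\Dlad) \cup \ov{\cB}(\bdqla) \cup \ov{\cP}(\bdqla)^{\times 2} \cup \ov{\cB}(\Dbdqla).
\]

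The final step is to strip off the auxiliary pieces $\cP(\la)^{\times 2} \cup \cB(\Dla)$ on the left by pairing them with matching terms on the right, leaving the desired identity for $\cB(\la)$. I would do this by comparing multiplicities term by term: for odd $y$, reducing the above equation modulo $2$ forces $y \in \cP(\la)$ if and only if exactly one of $y \in \cP(\lad)$ and $y \in \ov{\cP}(\bdqla)$ holds, so $\cP(\la)$ equals the symmetric difference of $\cP(\lad)$ and $\ov{\cP}(\bdqla)$. Here Corollary \ref{distinctparts} is essential, as it guarantees that each of these three multisets consists of distinct values and makes the symmetric-difference statement meaningful. Consequently each $y \in \cP(\lad) \cap \ov{\cP}(\bdqla)$ is over-counted once on the right among the parts and $2y$ is under-counted once on the left among the doubled parts; propagating the imbalance into $\cB(\Dla)$ and lifting the halving $y \mapsto y/2$ to all even values produces exactly the swap described by $\widetilde{\cB}(\bdqla) = \ov{\cB}(\bdqla) \setminus [\cP(\lad) \cap \ov{\cP}(\bdqla)] \cup [\cP(\lad) \cap \ov{\cP}(\bdqla)]^{\times 2}$.

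The main obstacle is this last multiplicity count. The doubled-parts relation $\cDP = \cP^{\times 2}$ couples the ``parts'' and ``doubled-parts'' components together, so that when $\cP(\lad)$ and $\ov{\cP}(\bdqla)$ overlap, the bookkeeping across these components on both sides of the BeGrOl identity is what produces the non-trivial correction converting the raw multiset $\ov{\cB}(\bdqla)$ into $\widetilde{\cB}(\bdqla)$.
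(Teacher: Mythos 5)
Your proposal is correct, and while it starts from the same foundations as the paper (apply \cite[Theorem 4.7]{BeGrOl} to $\Dla$, whose $d$-core is $\Dlad$ and whose $d$-quotient partition is $\Dbdqla$, then translate through the $P/DP/B/NB$ dissection using Corollaries \ref{modifiedlength}, \ref{doubleparts} and \ref{distinctparts}), the way you extract the bar-length identity is genuinely different. The paper does \emph{not} work with the global identity $\cH(\Dla)=\cH(\Dlad)\cup\ov{\cH}(\Dbdqla)$: it refines \cite[Theorem 4.7]{BeGrOl} to individual pairs of runners (which requires going into the proof of \cite[Theorem 3.2]{BeGrOl}, not just the statement of the cited theorem) and then runs a three-case analysis on the cardinality of $\{i,j,i^*,j^*\}$, isolating in each case which of the components $P$, $DP$, $B$, $NB$ can occur and using the unique decomposition of a multiset as $R+2S$ with $R$ multiplicity-free. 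Your route instead stays global and resolves the coupling between $\cP$ and $\cDP=\cP^{\times 2}$ by a parity count: comparing multiplicities modulo $2$ gives $[y\in\cP(\la)]+[y/2\in\cP(\la)]\equiv[y\in\cP(\lad)]+[y/2\in\cP(\lad)]+[y\in\ov{\cP}(\bdqla)]+[y/2\in\ov{\cP}(\bdqla)]\modtwo$, and an induction on the $2$-adic valuation of $y$ (your ``lifting the halving to all even values'') yields $\cP(\la)=\cP(\lad)\circ\ov{\cP}(\bdqla)$; the exact multiplicities of $\cB(\Dla)$ then follow by subtraction and division by $2$. What your approach buys is the elimination of both the runner-pair refinement and the case analysis; what the paper's approach buys is finer local information (the runner-by-runner statements, which also make Corollary \ref{inclusions}(3) transparent). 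Two points in your write-up should be made precise in a final version: first, the ``lifting'' step is an induction on the $2$-adic valuation and should be stated as such, with the observation that the resulting linear system determines $[y\in\cP(\la)]$ uniquely because parts of a bar partition are distinct and, by Corollary \ref{distinctparts}, so are the elements of $\ov{\cP}(\bdqla)$; second, the final cancellation should be carried out at the level of integer-valued multiplicity functions, since the intermediate differences (e.g.\ removing $\cP(\la)^{\times 2}\cup\cB(\Dla)$ from both sides) are not literally sub-multiset removals until $\cP(\la)$ has been identified.
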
 

\begin{proof}

We write $$B(\la)= \bigcup_{0 \leq i \leq d-1} B_{\{i\}}(\la) \cup 
\bigcup_{0 \leq i < j \leq d-1} B_{\{ij\}}(\la)$$
(where we have $B_{\{i\}}(\la)= B_{i \rightarrow i}(\la)=
B_{i \rightarrow i}(\Dla) \cup P_{i \rightarrow i}(\Dla)$ 
and $B_{\{ij\}}(\la)=B_{i \rightarrow j}(\la) \cup B_{j \rightarrow
  i}(\la)=B_{\{ij\}}(\Dla) \cup P_{\{ij\}}(\Dla)$).
Now $\bigcup_{0 \leq i \leq d-1} B_{\{i\}}(\la)$ is exactly the set 
of bars of length divisible by $d$ in $\la$. By construction 
(see Section \ref{doubling}), we thus have
$$\bigcup_{0 \leq i \leq d-1} \cB_{\{i\}}(\la) 
= \bigcup_{0 \leq i \leq d-1} \cB_{\{i\}}(\bdqla) 
= \bigcup_{0 \leq i \leq d-1} \overline{\cB}_{\{i\}}(\bdqla),$$
and$$\bigcup_{0 \leq i \leq d-1} \cB_{\{i\}}(\lad)= \emptyset.$$

We also want to examine separately the case of parts of length divisible by $d$ in $\la$. These are the bars of type 2 (see \cite[Section 4]{Olsson-Combinatorics}) of length divisible by $d$ in $\la$, and, if we write $\ladq=\bdqla^{(\bd)}=(\mu_0, \, \mu_1, \, \ldots , \, \mu_{\frac{d-1}{2}})$, then they correspond bijectively to the parts of $\mu_0$ (and are $d$ times as long). Since $\bdqla$ has the same $\bd$-quotient as $\la$, we see that it has the same parts of length divisible by $d$.

\smallskip

To prove our result, it is now sufficient to consider the bars 
whose length is not divisible by $d$, i.e. which correspond to a bead 
and an empty spot on distinct runners $i$ and $j$ in the abacus. 
We distinguish between three cases, corresponding to the three
possible cardinalities of the set $\{i, \, j, \, i^*, \, j^*\}$.

\smallskip

\noindent
{\bf{Case (1).}} Take any $0 \leq i < j \leq d-1$ such that 
$|\{i, \, j, \, i^*, \, j^*\}|=4$. In particular, $j \neq i^*$, so 
that $DP_{\{ij\}}(\Dla)=\emptyset=DP_{\{i^*j^*\}}(\Dla)$, and 
$0 \not \in \{i, \, j, \, i^*, \, j^*\}$, so that 
$P_{\{ij\}}(\Dla)=\emptyset=P_{\{i^*j^*\}}(\Dla)$ 
(and similarly for $\Dlad$ and $\Dbdqla$).

By \cite[Theorem 4.7]{BeGrOl} (which can be refined to pairs of 
runners using the proof of \cite[Theorem 3.2]{BeGrOl}), we have
$$\cH_{\{ij\}}(\Dla)=\cH_{\sij}(\Dlad) \cup \ov{\cH}_{\sij}(\Dbdqla).$$
Now $\cH_{\{ij\}}(\Dla)=\cHij(\Dla) \cup \cHji(\Dla)$, and we have
$$\cHij(\Dla)=\cBij(\Dla) \cup \cNBij(\Dla) $$and$$ \cHji(\Dla)
=\cBji(\Dla) \cup \cNBji(\Dla).$$
Also, by Corollary \ref{star},$$\cNBij(\Dla) 
= \cBjistar(\Dla) \; \; \mbox{and} \; \; \cNBji(\Dla) 
= \cBijstar(\Dla),$$whence$$\cH_{\{ij\}}(\Dla)
=\cB_{\sij}(\Dla) \cup \cB_{\sijs}(\Dla)
=\cB_{\sij}(\la) \cup \cB_{\sijs}(\la).$$
Similarly, $\cH_{\{ij\}}(\Dlad)=\cB_{\sij}(\lad) \cup
\cB_{\sijs}(\lad)$, and, using Corollary \ref{modifiedlength}, 
$\ov{\cH}_{\{ij\}}(\Dbdqla)=\ov{\cB}_{\sij}(\bdqla) \cup
\ov{\cB}_{\sijs}(\bdqla)$. Hence, in this case,
$$\cB_{\sij}(\la) \cup \cB_{\sijs}(\la)
=\cB_{\sij}(\lad) \cup \cB_{\sijs}(\lad) 
\cup \ov{\cB}_{\sij}(\bdqla) \cup \ov{\cB}_{\sijs}(\bdqla).$$

\smallskip

\noindent
{\bf{Case (2)}.} Take any $0 \leq i < j \leq d-1$ such that 
$|\{i, \, j, \, i^*, \, j^*\}|=3$. This means that $i=i^*=0$, and 
$j \neq j^*$ (in particular, $j \neq 0$ and $j \neq i^*$), so that 
$P_{i \rightarrow j}(\Dla)= \emptyset$ and 
$DP_{\{ij\}}(\Dla)=\emptyset=DP_{\{i^*j^*\}}(\Dla)$ 
(and similarly for $\Dlad$ and $\Dbdqla$).

This time, we have$$\cHij(\Dla)=\cH_{0 \rightarrow j}(\Dla)
=\cB_{0 \rightarrow j}(\Dla) \cup \cNB_{0 \rightarrow j}(\Dla)$$
and
$$\cH_{j \rightarrow 0}(\Dla)=\cP_{j \rightarrow 0}(\Dla) 
\cup \cB_{j \rightarrow 0}(\Dla) \cup \cNB_{j \rightarrow 0}(\Dla).$$
Also, by Corollary \ref{star}, 
we have $$\cNB_{0 \rightarrow j}(\Dla)=\cB_{j^* \rightarrow 0}(\Dla)
\; \;  \mbox{and} \; \; \cNB_{j \rightarrow 0}(\Dla)
=\cB_{0 \rightarrow j^*}(\Dla),$$ whence $\cH_{\{0j\}}(\Dla)= 
\cP_{\{0j\}}(\Dla) \cup \cB_{\{0j\}}(\Dla) \cup \cB_{\{0j^*\}}(\Dla)$.

Now $0 < j^* \leq d-1$, and $(0, \, j^*)$ satisfies the same condition
as $(0, \, j)$ (and, in fact, $\{0, \, j, \, 0^*, \, j^*\}=
\{0, \, j^*, \, 0^*, \, (j^*)^*\}$). Thus we also have 
$\cH_{\{0j^*\}}(\Dla)= \cP_{\{0j^*\}}(\Dla) \cup \cB_{\{0j^*\}}(\Dla) 
\cup \cB_{\{0j\}}(\Dla)$.

Writing $\cH^{\la}_{\{0jj^*\}}$ for $\cH_{\{0j\}}(\Dla) \cup \cH_{\{0j^*\}}(\Dla)$, and using similar notation for parts and bars, we hence obtain 
$$\cH^{\la}_{\{0jj^*\}} = \cP^{\la}_{\{0jj^*\}} \cup 2 \cB^{\la}_{\{0jj^*\}},$$
where, for any multiset $A$, we write $2A$ for $A \cup A$. Similarly, and with analogous notation, we have$$\cH^{\lad}_{\{0jj^*\}} = \cP^{\lad}_{\{0jj^*\}} \cup 2 \cB^{\lad}_{\{0jj^*\}}.$$And, using Corollary \ref{modifiedlength}, 
we also obtain$$\ov{\cH}_{\{0jj^*\}}^{\bdqla} 
= \ov{\cP}_{\{0jj^*\}}^{\bdqla}  \cup   
2 \ov{\cB}_{\{0jj^*\}}^{\bdqla}.$$
Finally, by \cite[Theorem 4.7]{BeGrOl}, we have
$$\cH^{\la}_{\{0jj^*\}}= \cH^{\lad}_{\{0jj^*\}} \cup \ov{\cH}_{\{0jj^*\}}^{\bdqla}  .$$
Rewriting this equality using the expressions we found above, we obtain
$$\begin{array}{c}
\cP^{\la}_{\{0jj^*\}} \cup 2 \cB^{\la}_{\{0jj^*\}}  = \cP^{\lad}_{\{0jj^*\}} \cup 2 \cB^{\lad}_{\{0jj^*\}} \cup  \ov{\cP}_{\{0jj^*\}}^{\bdqla}  \cup   
2 \ov{\cB}_{\{0jj^*\}}^{\bdqla} \\  = \cP^{\lad}_{\{0jj^*\}} \circ  \ov{\cP}_{\{0jj^*\}}^{\bdqla}  \cup 2 [ (\cP^{\lad}_{\{0jj^*\}} \cap  \ov{\cP}_{\{0jj^*\}}^{\bdqla} ) \cup \cB^{\lad}_{\{0jj^*\}}  \cup \ov{\cB}_{\{0jj^*\}}^{\bdqla} ],
\end{array}$$
where $\circ$ denotes symmetric difference.

Now any multiset $Q$ has a unique decomposition of the form $Q=R + 2S$, where $R$ and $S$ are sub-multisets, and the elements of $R$ are distinct. The elements of $ \cP^{\lad}_{\{0jj^*\}}$ are distinct since $\lad$ is a bar partition, and those of $\ov{\cP}_{\{0jj^*\}}^{\bdqla} $ are distinct by Corollary \ref{distinctparts}, whence the elements of $\cP^{\lad}_{\{0jj^*\}} \circ  \ov{\cP}_{\{0jj^*\}}^{\bdqla} $ are distinct. This implies that $\cP^{\la}_{\{0jj^*\}}=\cP^{\lad}_{\{0jj^*\}} \circ  \ov{\cP}_{\{0jj^*\}}^{\bdqla} $ and $\cB^{\la}_{\{0jj^*\}} = (\cP^{\lad}_{\{0jj^*\}} \cap  \ov{\cP}_{\{0jj^*\}}^{\bdqla} ) \cup \cB^{\lad}_{\{0jj^*\}}  \cup \ov{\cB}_{\{0jj^*\}}^{\bdqla} $. In particular, we obtain, for each $1 \leq j \leq \frac{d-1}{2}$,
$$\cB_{\{0jj^*\}}(\la) =
\cB_{\{0jj^*\}}(\lad)  \cup
\ov{\cB}_{\{0jj^*\}}(\bdqla) \setminus (\cP^{\lad}_{\{0jj^*\}} \cap  \ov{\cP}_{\{0jj^*\}}^{\bdqla} ) .$$

\smallskip

\noindent
{\bf{Case (3).}} Finally, take any $0 \leq i < j \leq d-1$ such that 
$|\{i, \, j, \, i^*, \, j^*\}|=2$. This means that $i \neq 0$, and 
$j = i^*$. In particular, no part is going to appear in this way, 
while all the doubled parts non-divisible by $d$ will. We have
$$\cH_{i \rightarrow i^*}(\Dla)
= \cDP_{i \rightarrow i^*}(\Dla) \cup \cB_{i \rightarrow i^*}(\Dla) 
\cup \cNB_{i \rightarrow i^*}(\Dla)$$ and $$\cH_{i^* \rightarrow
  i}(\Dla)
= \cDP_{i^* \rightarrow i}(\Dla) \cup \cB_{i^* \rightarrow i}(\Dla) 
\cup \cNB_{i^* \rightarrow i}(\Dla),$$
whence, by Corollary \ref{star}, $\cH_{ \{ii^*\} }(\Dla)=
\cDP_{ \{ii^*\} }(\Dla) \cup 2 \,  \cB_{ \{ii^*\} }(\Dla)$. 
Similarly, $\cH_{ \{ii^*\} }(\Dlad)=\cDP_{ \{ii^*\} }(\Dlad) \cup 2 
\,  \cB_{ \{ii^*\} }(\Dlad)$ and, by Corollary \ref{modifiedlength}, 
$\ov{\cH}_{ \{ii^*\} }(\Dbdqla)=\ov{\cDP}_{ \{ii^*\} }(\Dbdqla) 
\cup 2 \,  \ov{\cB}_{ \{ii^*\} }(\Dbdqla)$.

Applying \cite[Theorem 4.7]{BeGrOl} and using similar notation to that used in Case (2), we obtain
$$\begin{array}{c}
\cDP^{\la}_{\{ii^*\}} \cup 2 \cB^{\la}_{\{ii^*\}}  = \cDP^{\lad}_{\{ii^*\}} \cup 2 \cB^{\lad}_{\{ii^*\}} \cup  \ov{\cDP}_{\{ii^*\}}^{\bdqla}  \cup   
2 \ov{\cB}_{\{ii^*\}}^{\bdqla} \\  = \cDP^{\lad}_{\{ii^*\}} \circ  \ov{\cDP}_{\{ii^*\}}^{\bdqla}  \cup 2 [ (\cDP^{\lad}_{\{ii^*\}} \cap  \ov{\cDP}_{\{ii^*\}}^{\bdqla} ) \cup \cB^{\lad}_{\{ii^*\}}  \cup \ov{\cB}_{\{ii^*\}}^{\bdqla} ].
\end{array}$$

Now the elements of $ \cDP^{\lad}_{\{ii^*\}}$ are distinct, and those of $\ov{\cDP}_{\{ii*\}}^{\bdqla} $ are distinct by Corollary \ref{distinctparts}, whence the elements of $\cDP^{\lad}_{\{ii^*\}} \circ  \ov{\cDP}_{\{ii^*\}}^{\bdqla} $ are distinct. This implies that $$\cB^{\la}_{\{ii^*\}} = (\cDP^{\lad}_{\{ii^*\}} \cap  \ov{\cDP}_{\{ii^*\}}^{\bdqla} ) \cup \cB^{\lad}_{\{ii^*\}}  \cup \ov{\cB}_{\{ii^*\}}^{\bdqla} .$$

Note that (using Corollary \ref{doubleparts}) we have 
$$\cDP^{\lad}_{\{ii^*\}} \cap  \ov{\cDP}_{\{ii^*\}}^{\bdqla}=[\cP^{\lad}_{\{0ii^*\}} \cap  \ov{\cP}_{\{0ii^*\}}^{\bdqla}]^{\times 2}:= \{ 2h \; | \; h \in \cP^{\lad}_{\{0ii^*\}} \cap  \ov{\cP}_{\{0ii^*\}}^{\bdqla} \}.$$
Thus, for each $1 \leq i \leq \frac{d-1}{2}$, we have
$$ \cB_{ \{ii^*\} }(\la)= \cB_{ \{ii^*\} }(\lad) 
\cup  \ov{\cB}_{ \{ii^*\} }(\bdqla) \cup [\cP^{\lad}_{\{0ii^*\}} \cap  \ov{\cP}_{\{0ii^*\}}^{\bdqla}]^{\times 2} .$$

\smallskip

Finally, we see that our three cases cover all the bars between 
any pair of (distinct) runners, because
$$ \bigcup_{0 \leq i < j \leq d-1, \atop |\{i,  j,  i^*,  j^*\}|=4} 
\{i,  j\} \cup \{i^*,  j^*\} \cup  
\bigcup_{1 \leq  j \leq \frac{d-1}{2}} \{0,  j\} \cup \{0,  j^*\} 
\cup  \bigcup_{1 \leq  i \leq \frac{d-1}{2}} \{i,  i^*\} 
=   \bigcup_{1 \leq i <  j \leq d-1} \{i,  j\}.$$

When we take the union of all the subsets of bars we computed, we see that all the parts of $\lad$ and $\bdqla$ which are not divisible by $d$ appear exactly once in case (2) and their doubles once in case (3). Since $\lad$ has no part divisible by $d$, we therefore obtain, together with the case of bars on a single runner,
$$ \cB(\la) =  [\cB(\lad) \cup \ov{\cB}(\bdqla)] \setminus [\cP(\lad) \cap \ov{\cP}(\bdqla)] \cup [\cP(\lad) \cap \ov{\cP}(\bdqla)]^{\times 2}$$
i.e. $ \cB(\la) =  \cB(\lad) \cup \widetilde{\cB}(\bdqla)$, as claimed.
 \end{proof}

For any bar partition $\mu$, we denote by $m(\mu)$ the number of parts of $\mu$.

\begin{corollary}\label{inclusions}
For any bar partition $\la$ and any odd integer $d \geq 3$, the following hold:
\begin{enumerate}

\item 
${\cal B}(\lad) \subset {\cal B}(\la)$,

\item
$\ov{{\cal B}}(\bdqla) \subset {\cal B}(\la)$,

\item
$\cP(\la)=\cP(\lad) \circ \ov{\cP}(\bdqla)$ ,

\item
$m(\lad)+m(\bdqla)=m(\la)+2|\cP(\lad) \cap \ov{\cP}(\bdqla)|,$

\item
$ \cB(\la) =  \cB(\lad) \cup \ov{\cB}(\bdqla)$ {\rm if and only if} $m(\la)=m(\lad) + m(\bdqla)$.

\end{enumerate}

\end{corollary}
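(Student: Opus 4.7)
The plan is to deduce all five assertions from Theorem \ref{main} together with the tools developed in Section 3. Throughout I write $X := \cP(\lad) \cap \ov{\cP}(\bdqla)$, so that by definition $\widetilde{\cB}(\bdqla) = (\ov{\cB}(\bdqla) \setminus X) \cup X^{\times 2}$.

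Parts (1) and (2) I would handle first, since they are immediate. Part (1) is nothing more than the multiset equality of Theorem \ref{main}. For (2), the observation is that $X \subset \cP(\lad) \subset \cB(\lad)$, so the copy of $X$ removed from $\ov{\cB}(\bdqla)$ in forming $\widetilde{\cB}(\bdqla)$ is still present in the $\cB(\lad)$ summand; thus every element of $\ov{\cB}(\bdqla)$ appears in $\cB(\lad) \cup \widetilde{\cB}(\bdqla) = \cB(\la)$.

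The main content lies in (3). My plan is to isolate the parts of $\la$ by revisiting Case (2) of the proof of Theorem \ref{main}: the unique decomposition $Q = R + 2S$ (with the elements of $R$ distinct), applied to $\cH^{\la}_{\{0jj^*\}}$, already yields $\cP^{\la}_{\{0jj^*\}} = \cP^{\lad}_{\{0jj^*\}} \circ \ov{\cP}_{\{0jj^*\}}^{\bdqla}$ for each $1 \leq j \leq (d-1)/2$, accounting for every part of $\la$ not divisible by $d$. The parts of $\la$ that are divisible by $d$ coincide with the parts of $\bdqla$ divisible by $d$ (as noted in Section \ref{doubling}), and they contribute on the $\ov{\cP}(\bdqla)$ side alone, since $\ov{h}(z) = h(z)$ whenever $z$ is a $P_{0 \rightarrow 0}$-hook and $\lad$ has no parts divisible by $d$. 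Assembling the two residue ranges yields $\cP(\la) = \cP(\lad) \circ \ov{\cP}(\bdqla)$.

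Parts (4) and (5) then follow with little extra work. For (4), apply $|\,\cdot\,|$ to the symmetric difference in (3): by Corollary \ref{distinctparts} the elements of $\ov{\cP}(\bdqla)$ are distinct, so $|\ov{\cP}(\bdqla)| = m(\bdqla)$, while $|\cP(\lad)| = m(\lad)$ trivially, and thus $m(\la) = m(\lad) + m(\bdqla) - 2|X|$. For (5), substitute the definition of $\widetilde{\cB}(\bdqla)$ into Theorem \ref{main}: the net deviation of $\cB(\la)$ from $\cB(\lad) \cup \ov{\cB}(\bdqla)$ is the replacement of a copy of $X$ by a copy of $X^{\times 2}$, and since $X$ and $X^{\times 2}$ are disjoint as sets (every positive integer differs from its double), this modification vanishes if and only if $X = \emptyset$, which by (4) is equivalent to $m(\la) = m(\lad) + m(\bdqla)$. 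The main obstacle I anticipate is formalising (3) in multiset arithmetic, keeping the part-contribution separate from the proper-bar contribution when invoking the $R + 2S$ decomposition; everything else is bookkeeping.
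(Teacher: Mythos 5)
Your proposal is correct and follows essentially the same route as the paper: (1) and (2) by reading off and rearranging the identity of Theorem \ref{main}, (3) from the $Q=R+2S$ decomposition in Case (2) of that proof together with the observation about parts divisible by $d$, and (4), (5) by counting. One small quibble: in (5), $X$ and $X^{\times 2}$ need not be disjoint (e.g.\ $X=\{1,2\}$), but the fact you actually need --- that $X^{\times 2}\neq X$ for non-empty $X$ --- is what the paper invokes and is immediate (compare maxima or sums).
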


\begin{proof}
(1) is immediate from Theorem \ref{main}. 
To obtain (2), one just has to rewrite the result as $\cB(\la)=
\ov{{\cal B}}(\bdqla) \cup   \left( \cB(\lad)  \setminus [\cP(\lad)
  \cap \ov{\cP}(\bdqla)] \right) \cup [\cP(\lad) \cap
\ov{\cP}(\bdqla)]^{\times 2}$. 
(3) is visible in the proof of Theorem \ref{main}: the parts of length
divisible by $d$ are the same in $\la$ and $\bdqla$ (while $\lad$ has
none), and those of length not divisible by $d$ in $\la$ are examined 
in Case (2) of the proof. Looking at the cardinalities of the sets 
involved, (4) is a direct consequence of (3). 
By (4), $m(\la)=m(\lad) + m(\bdqla)$ if and only if 
$\cP(\lad) \cap \ov{\cP}(\bdqla) = \emptyset$. This, in turn, is 
by Theorem \ref{main} equivalent to $ \cB(\la) =  \cB(\lad) \cup \ov{\cB}(\bdqla)$ (as $A^{\times 2} \neq A$ for any non-empty multiset $A$).

\end{proof}

\noindent
{\bf{Remark:}} Note that the situation given in (5) above does occur, 
for instance in the example we introduced in Section \ref{doubling}. 

\medskip



We now illustrate Theorem \ref{main} by an explicit example. As the above remark shows, the example we introduced in Section \ref{doubling} doesn't fully illustrate the extent of Theorem \ref{main}. We therefore consider instead the bar partition $\la=(13, \, 10, \, 4)$ of $n=27$, and $d=3$. Below is the shifted diagram of $\la$, filled in with the corresponding bar lengths:

\setlength{\unitlength}{1.3mm}

\linethickness{0.2mm}

\begin{center}

\begin{picture}(52,18)

\put(0,16){\line(1,0){52}}
\put(0,12){\line(1,0){52}}

\put(4,8){\line(1,0){40}}
\put(8,4){\line(1,0){16}}

\put(0,16){\line(0,-1){4}}
\put(4,16){\line(0,-1){8}}
\put(8,16){\line(0,-1){12}}
\put(12,16){\line(0,-1){12}}
\put(16,16){\line(0,-1){12}}
\put(20,16){\line(0,-1){12}}
\put(24,16){\line(0,-1){12}}
\put(28,16){\line(0,-1){8}}
\put(32,16){\line(0,-1){8}}
\put(36,16){\line(0,-1){8}}
\put(40,16){\line(0,-1){8}}
\put(44,16){\line(0,-1){8}}
\put(48,16){\line(0,-1){4}}
\put(52,16){\line(0,-1){4}}

\put(49.5,13){1}
\put(45.5,13){2}
\put(41.5,13){4}
\put(37.5,13){5}
\put(33.5,13){6}
\put(29.5,13){7}
\put(25.5,13){8}
\put(20.5,13){10}
\put(16.5,13){11}
\put(12.5,13){12}
\put(8.5,13){\bf{13}}
\put(4.5,13){17}
\put(0.5,13){23}

\put(41.5,9){1}
\put(37.5,9){2}
\put(33.5,9){3}
\put(29.5,9){4}
\put(25.5,9){5}
\put(21.5,9){\bf{7}}
\put(17.5,9){8}
\put(13.5,9){9}
\put(8.5,9){\bf{10}}
\put(4.5,9){\bf{14}}

\put(21.5,5){\bf{1}}
\put(17.5,5){\bf{2}}
\put(13.5,5){3}
\put(9.5,5){\bf{4}}

\end{picture}

\end{center}

We then have $\bar{c}_3(\la)=(7, \, 4, \, 1)$, with corresponding shifted diagram:

\setlength{\unitlength}{1.3mm}

\linethickness{0.2mm}

\begin{center}

\begin{picture}(28,18)

\put(0,16){\line(1,0){28}}
\put(0,12){\line(1,0){28}}

\put(4,8){\line(1,0){16}}
\put(8,4){\line(1,0){4}}

\put(0,16){\line(0,-1){4}}
\put(4,16){\line(0,-1){8}}
\put(8,16){\line(0,-1){12}}
\put(12,16){\line(0,-1){12}}
\put(16,16){\line(0,-1){8}}
\put(20,16){\line(0,-1){8}}
\put(24,16){\line(0,-1){4}}
\put(28,16){\line(0,-1){4}}

\put(25.5,13){1}
\put(21.5,13){2}
\put(17.5,13){4}
\put(13.5,13){5}
\put(9.5,13){\bf{7}}
\put(5.5,13){8}
\put(0.5,13){11}

\put(17.5,9){1}
\put(13.5,9){2}
\put(9.5,9){\bf{4}}
\put(5.5,9){5}

\put(9.5,5){\bf{1}}

\end{picture}

\end{center}

The $\bd$-quotient-partition of $\la$ is $\bar{q}_3(\la)= ( 8, \, 4, \, 2, \, 1)$. It has
the following shifted diagram, where we indicate alongside the rim the
runners to consider (i.e. the hand residue at the end of rows, and the
foot residue decreased by 1 at the end of columns):

\setlength{\unitlength}{1.3mm}

\linethickness{0.2mm}

\begin{center}

\begin{picture}(40,22)

\put(0,20){\line(1,0){32}}
\put(0,16){\line(1,0){32}}

\put(4,12){\line(1,0){16}}
\put(8,8){\line(1,0){8}}
\put(12,4){\line(1,0){4}}

\put(0,20){\line(0,-1){4}}
\put(4,20){\line(0,-1){8}}
\put(8,20){\line(0,-1){12}}
\put(12,20){\line(0,-1){16}}
\put(16,20){\line(0,-1){16}}
\put(20,20){\line(0,-1){8}}
\put(24,20){\line(0,-1){4}}
\put(28,20){\line(0,-1){4}}
\put(32,20){\line(0,-1){4}}

\put(33,18){\scriptsize{$2$}}
\put(30,14){\scriptsize{$1$}}
\put(26,14){\scriptsize{$0$}}
\put(22,14){\scriptsize{$2$}}

\put(20.5,13){\scriptsize{$1$}}

\put(18,10){\scriptsize{$0$}}

\put(16.5,9){\scriptsize{$2$}}
\put(16.5,5){\scriptsize{$1$}}

\put(13.5,2){\scriptsize{$0$}}

\put(9.5,6){\scriptsize{$2$}}
\put(5.5,10){\scriptsize{$1$}}
\put(1.5,14){\scriptsize{$2$}}

\put(29.5,17){1}
\put(25.5,17){2}
\put(21.5,17){3}
\put(17.5,17){5}
\put(13.5,17){8}
\put(9.5,17){9}
\put(4.5,17){10}
\put(0.5,17){12}

\put(17.5,13){1}
\put(13.5,13){4}
\put(9.5,13){5}
\put(5.5,13){6}

\put(13.5,9){2}
\put(9.5,9){3}

\put(13.5,5){1}

\end{picture}

\end{center}

We can now compute the modified bar lengths. The normalized $\ba$-set $\{0,1, 3, 4 ,6 ,7, 8, 9 ,10 ,12, 13 ,14, 19, 25, 28 \}$ for $\Dla$ gives us $x_0=5$,
$x_1=8$ and $x_2=2$. For $0 \leq i , \,  j \leq 2$ and 
$z \in \cBij(\bar{q}_3(\la))$, we have $\ov{h}(z)=h(z)+3(x_i-x_j)$, 
and $\ov{\cB}_{i \rightarrow j}(\bar{q}_3(\la))=
\{ |\ov{h}(z)| \; | \; z \in \cBij(\bar{q}_3(\la)\}$. This gives the following:

\setlength{\unitlength}{1.3mm}

\linethickness{0.2mm}

\begin{center}

\begin{picture}(40,22)

\put(0,20){\line(1,0){32}}
\put(0,16){\line(1,0){32}}

\put(4,12){\line(1,0){16}}
\put(8,8){\line(1,0){8}}
\put(12,4){\line(1,0){4}}

\put(0,20){\line(0,-1){4}}
\put(4,20){\line(0,-1){8}}
\put(8,20){\line(0,-1){12}}
\put(12,20){\line(0,-1){16}}
\put(16,20){\line(0,-1){16}}
\put(20,20){\line(0,-1){8}}
\put(24,20){\line(0,-1){4}}
\put(28,20){\line(0,-1){4}}
\put(32,20){\line(0,-1){4}}

\put(28.5,17){17}
\put(25.5,17){7}
\put(21.5,17){3}
\put(17.5,17){4}
\put(13.5,17){\bf{1}}
\put(9.5,17){9}
\put(5.5,17){8}
\put(0.5,17){12}

\put(16.5,13){10}
\put(12.5,13){\bf{13}}
\put(8.5,13){23}
\put(5.5,13){6}

\put(13.5,9){\bf{7}}
\put(9.5,9){3}

\put(12.5,5){\bf{10}}

\end{picture}

\end{center}

It is then easy to check that the result announced by Theorem \ref{main} does hold. We just explicitely describe the case of parts (indicated in bold in the above diagrams). We see that, in accordance with Corollary \ref{inclusions}, $\cP(\la)=\{ 13, \, 10 , \, 4 \} =\cP(\bar{c}_3(\la)) \circ \ov{\cP}(\bar{q}_3(\la))$. And, for the last four bars in bold in the diagram of $\la$, we have $\{1, \, 2, \, 7, \, 14\}=\{1, \, 7\} \cup \{2, \, 14\}=[\cP(\bar{c}_3(\la)) \cap \ov{\cP}(\bar{q}_3(\la))] \cup [\cP(\bar{c}_3(\la)) \cap \ov{\cP}(\bar{q}_3(\la))]^{\times 2}$.

\medskip

In \cite[Corollary 4.12]{BeGrOl}, a generalization of a relative hook
formula discovered by G. Malle and G. Navarro was presented. We finish
this paper by the bar analogue of \cite[Corollary 4.12]{BeGrOl}.

If $\la$ is a bar partition of $n$ we let again $\rho_{\la}$
be an irreducible spin character of  $\widehat{\sym_n}$ labelled by
$\la.$ We define $\sigma(\la)=|\la|-m(\la)$, and $\delta(\la)={\floor{ \sigma(\la)/2}}$ so that 
the bar formula reads 
$$\rho_{\la}(1)=2^{\delta(\la)}\frac{n!}{\pi {\cal B}(\la)}.$$
By Corollary \ref{inclusions} (3), we have that
$m(\la)=m(\lad)+m(\bdqla)-2 \delta$, where $\delta=|\cP(\lad) \cap
\ov{\cP}(\bdqla) |$. It follows from this and from
$|\la|=|\bdqla|+|\lad|$ 
that $\sa(\la)=\sa(\lad)+\sa(\bdqla)+2 \delta$. We thus have
\begin{equation}\label{deltaeq}
\delta(\la)=\delta(\bdqla) + \delta(\lad)+\delta + \varepsilon ,
\end{equation}
where $\varepsilon=0$ if $\sigma(\la)$ is odd or if $\sigma(\la)$ and
$\sigma(\lad)$ are both even, and  $\varepsilon=1$ otherwise. Theorem
\ref{main} now implies that 
$$\pi {\cal B}(\la) =\pi\widetilde{\cB}(\bdqla) . \pi {\cal B}(\lad)=2^{\delta}\pi\ov{\cB}(\bdqla) . \pi {\cal B}(\lad).$$
Combining this with formula (\ref{deltaeq}) we get a relative bar formula:

\begin{corollary} With the above notation
$$\rho_{\la}(1)=
\frac{|\la|! }{|\lad|!}\cdot \frac{2^{\delta(\bdqla)+\varepsilon}}{\pi\ov{\cB}(\bdqla)}\rho_{\lad}(1).$$

\end{corollary}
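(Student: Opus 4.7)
The plan is to combine Morris's bar formula, applied both to $\la$ and to $\lad$, with the two identities established immediately before the statement: the multiplicative formula $\pi\cB(\la) = 2^{\delta}\,\pi\ov{\cB}(\bdqla)\cdot\pi\cB(\lad)$ (an immediate consequence of Theorem~\ref{main} together with $\pi\widetilde{\cB}(\bdqla) = 2^{\delta}\pi\ov{\cB}(\bdqla)$, because the $[\cP(\lad)\cap\ov{\cP}(\bdqla)]$-part of $\widetilde{\cB}(\bdqla)$ contributes the extra factor $2^{\delta}$) and the additive exponent formula (\ref{deltaeq}).

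First I would write the bar formula
$$\rho_{\la}(1) = 2^{\delta(\la)}\,\frac{|\la|!}{\pi\cB(\la)}$$
and substitute the multiplicative identity for $\pi\cB(\la)$, producing
$$\rho_{\la}(1) = \frac{2^{\delta(\la)-\delta}\,|\la|!}{\pi\ov{\cB}(\bdqla)\cdot\pi\cB(\lad)}.$$
Next I would apply (\ref{deltaeq}) to replace $\delta(\la)-\delta$ by $\delta(\bdqla)+\delta(\lad)+\varepsilon$, and multiply and divide by $|\lad|!$ so as to isolate the factor $2^{\delta(\lad)}\,|\lad|!/\pi\cB(\lad) = \rho_{\lad}(1)$. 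Collecting the remaining terms yields
$$\rho_{\la}(1) = \frac{|\la|!}{|\lad|!}\cdot\frac{2^{\delta(\bdqla)+\varepsilon}}{\pi\ov{\cB}(\bdqla)}\cdot\rho_{\lad}(1),$$
which is the claimed formula.

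The derivation is purely mechanical once the two preparatory identities are in hand, so there is no genuine obstacle. The one point that deserves a careful check is the combinatorial bookkeeping encoded in $\varepsilon$, which arises because $\floor{\cdot}$ is not additive. Using $\sa(\la)=\sa(\lad)+\sa(\bdqla)+2\delta$, one has $\sa(\la) \equiv \sa(\lad)+\sa(\bdqla) \pmod 2$, so only a short parity case analysis (on whether $\sa(\la)$ is odd and, when it is even, on whether $\sa(\lad)$ and $\sa(\bdqla)$ are both even or both odd) is needed to verify that the prescription $\varepsilon=0$ if $\sa(\la)$ is odd or if $\sa(\la)$ and $\sa(\lad)$ are both even, and $\varepsilon=1$ otherwise, correctly produces $\floor{\sa(\la)/2} = \floor{\sa(\lad)/2}+\floor{\sa(\bdqla)/2}+\delta+\varepsilon$. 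This tiny parity verification is the only non-trivial ingredient of the proof.
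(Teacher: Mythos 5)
Your proposal is correct and follows essentially the same route as the paper: the text immediately preceding the corollary derives exactly the two identities you use, namely $\pi\cB(\la)=2^{\delta}\pi\ov{\cB}(\bdqla)\cdot\pi\cB(\lad)$ (from Theorem \ref{main} and the definition of $\widetilde{\cB}(\bdqla)$) and the exponent relation (\ref{deltaeq}), and then combines them with the bar formula just as you do. Your parity analysis of $\varepsilon$ also matches the paper's prescription, so there is nothing to add.
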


\noindent\textbf{Acknowledgements.}\quad
J.-B. Gramain gratefully acknowledges financial support from a grant
of the Agence Nationale de la Recherche (number
ANR-10-PDOC-021-01). He also wishes to express his gratitude to
J. B. Olsson and J. Grodal for their (not only financial) support
during his stay at the University of Copenhagen, where this work was
done. Finally, the authors thank C. Bessenrodt for useful discussions
and for a careful reading of the manuscript, thereby pointing 
out a problem in an earlier version of this work.

\end{document}